\documentclass[a4paper,11pt]{article}
\usepackage[ansinew]{inputenc}
\usepackage[spanish,english]{babel}
\usepackage{amsmath,amsfonts,amssymb,amsthm}
\usepackage{mathtools}
\usepackage{graphicx,subfigure}
\usepackage[usenames,dvipsnames]{color}
\usepackage{lineno}
\usepackage{comment}
\usepackage{enumerate}
\usepackage[colorlinks=true,citecolor=black,linkcolor=black,urlcolor=blue]{hyperref}
\usepackage{amscd,latexsym}
\usepackage{authblk}
\usepackage{cite}
\usepackage{soul}
\usepackage{xcolor}
\usepackage[letterpaper,top=2cm,bottom=2cm,left=3cm,right=3cm,marginparwidth=1.75cm]{geometry}
\usepackage[T1]{fontenc}  
\usepackage[inline]{enumitem}
\usepackage{float}

\theoremstyle{plain}
\newtheorem{theorem}{Theorem}
\newtheorem{lemma}{Lemma}

\newtheorem{problem}{Open problem}

\newcommand{\cn}{\left\lceil \frac{2n}3\right\rceil}
\newcommand{\cnp}{\left\lceil \frac{2(n-3)}3\right\rceil}

\date{}

\title{On the fault-tolerant metric dimension of maximal outerplanar graphs}

\author[1]{Carmen Hernando}
\author[1]{Montserrat Maureso}
\author[1]{Merc\`e Mora}
\author[2]{Javier Tejel}
\affil[1]{Departament de Matem\`atiques, Universitat Polit\`ecnica de Catalunya, Barcelona, Spain.  \{carmen.hernando, montserrat.maureso, merce.mora\}@upc.edu}
\affil[2]{Departamento de M\'etodos Estad\'\i sticos and IUMA, Universidad de Zaragoza, Zaragoza, Spain. jtejel@unizar.es}

\begin{document}

\maketitle

\begin{abstract}
Given a graph $G$, a set $W$ of vertices of $G$ is a resolving set if, for any pair of vertices $z_1$ and $z_2$ of $G$, there is a vertex $w\in W$ such that $d(w,z_1)\ne d(w,z_2)$, where $d(u,v)$ denotes the length of a shortest path between the vertices $u$ and $v$. 
The metric dimension of a graph $G$, $\beta(G)$, is the minimum cardinality of a resolving set of $G$. A resolving set $W$ is fault-tolerant if $W\setminus \{w\}$ is a resolving set for any $w\in W$. The fault-tolerant metric dimension of a graph $G$, denoted by $\beta'(G)$, is the minimum cardinality of a fault-tolerant resolving set of $G$.

In this paper, we study the fault-tolerant metric dimension problem in maximal outerplanar graphs. In particular, we show that there are exactly two maximal outerplanar graphs with $\beta'(G)=3$. 
We also show that $4\le \beta'(G) \le 6$ for any maximal outerplanar graph $G$ of order at least 7 and metric dimension 2. Moreover, we establish the upper bound $\beta'(G) \le \left\lceil \frac{2n}{3}  \right\rceil$ for maximal outerplanar graphs $G$ of order $n\ge 5$, by building in linear time a fault-tolerant resolving set of cardinality $\cn$.

\vskip 0.5 cm
\noindent {\bf Keywords}: Metric dimension; resolving sets; fault-tolerant resolving sets; fault-tolerant metric dimension; maximal outerplanar graphs.

\vskip 0.5 cm
\noindent {\bf Mathematics Subject Classification}: 05C69, 05C10.
\end{abstract}

\section{Introduction}\label{se:intro}

The metric dimension of a graph, a concept introduced independently by Slater~\cite{Slater} and by Harary and Melter~\cite{Har76}, is a classical combinatorial parameter that measures how well the vertices of a graph can be uniquely identified by distances to a set of landmark vertices. 
These landmarks are typically used to detect failures or intruders at the nodes of a network.

Formally, the notion of metric dimension is defined as follows. Given a graph $G = (V, E)$, let $d(u,v)$ denote the length of a shortest path in $G$ from vertex $u$ to vertex $v$. 
A vertex $w$ \emph{resolves} a pair of vertices $u$ and $v$, if $d(u, w) \ne d(v, w)$. A subset $W \subseteq V$ is called a \emph{resolving set} of $G$ if for every pair of distinct vertices $u, v \in V$, there exists a vertex $w \in  W$ such that $w$ resolves $u$ and $v$. 
Thus, if $r(y|W) = (d(y,w_1),\ldots, d(y,w_k))$ is the vector of distances from a vertex $y$ to the vertices of $W=\{w_1, \ldots , w_k\}\subseteq V(G)$, then $W$ is a resolving set if and only if $r(u|W)\not= r(v|W)$ whenever $u\not= v$. 
The {\em metric dimension} of $G$, denoted by $\beta(G)$, 
is the minimum cardinality among all resolving sets of $G$. A resolving set with cardinality $\beta(G)$ is called a \emph{metric basis} of $G$.

In general, computing the metric dimension or finding a metric basis of a graph is a very challenging problem. We refer the reader to~\cite{Gao25, Till23} for recent surveys on the metric dimension of graphs and its numerous applications in various fields of science and technology.

In some applications, such as network navigation or sensor placement, robustness is crucial: if one or more landmarks fail, the identification property should still hold, so that any intruder or failure can be detected by the remaining landmarks. This motivates the notion of fault-tolerant metric dimension, which was introduced by Hernando et al.~\cite{Her08}. 
A subset $W\subseteq V$ of $G$ is a \emph{fault-tolerant resolving set}, FTRS for short, if, for every $w \in W$, $W\setminus \{w\}$ is a resolving set. In other words, $W$ is a FTRS if every pair of vertices of $G$ is resolved by at least two vertices of $W$. A FTRS $W$ of $G$ with minimum cardinality is called a \emph{fault-tolerant metric basis} (FTMB for short) of $G$, and the \emph{fault-tolerant metric dimension} (FTMD for short) of $G$, denoted by $\beta'(G)$, is defined as this minimum cardinality.

In the last two decades, intensive research on the FTMD problem has been conducted, primarily aimed at determining exact or approximate values of this parameter for different families of graphs. In particular, the FTMD problem has been studied for trees~\cite{Her08}, circulant graphs~\cite{Huo20,Bas20,Saha22}, generalized Petersen graphs and sunlet graphs~\cite{Bash22}, the cube of the path graph~\cite{Saha21}, generalized prism graphs~\cite{Huo20, Fah25}, antiprism graphs~\cite{Shar25}, the Cartesian product of graphs~\cite{Gee17}, two and three dimensional grids~\cite{BY19,MST22}, certain plane rotationally-symmetric graphs~\cite{Bash21}, arithmetic graphs~\cite{Sar24}, fan and corona graphs~\cite{Cab21, Estr16bis}, the double fan graph~\cite{Rah22}, generalized wheels~\cite{Zhe21}, several ladder graphs~\cite{Wang21}, biswapped networks~\cite{Ass25}, butterfly, Benes and silicate networks~\cite{Hay20, Pra22}, some families of convex polytopes~\cite{Sid19,Zhe21, Man25}, 
and several families of chemical structures~\cite{Som19, Nad22, Aze21}. For additional results on the FTMD problem and its variants, see~\cite{Gen25, Knor25, Jav09, Khan25, Liu21}.

FTMD is also known in the literature as 2-metric dimension~\cite{Estr15}. A subset $W\subseteq V$ of $G$ is a {\em $k$-resolving set} if, for every pair of distinct vertices $u,v\in V$, there exist at least $k$ vertices in $W$
that  resolve the pair $u$ and $v$. The {\em $k$-metric dimension} of $G$ is the minimum cardinality of a $k$-resolving set. For several results concerning the $k$-metric dimension, the reader is referred to~\cite{Estr15, Estr16, Estr16bis, Yero17,Corr21, Sch23} and the references therein.

In this paper, we focus on studying the FTMD of maximal outerplanar graphs. A graph $G$ is {\em outerplanar} if it can be drawn in the plane without edge crossings and with all vertices belonging to the unbounded face. A {\em maximal outerplanar graph}, MOP for short, is an outerplanar graph such that the addition of any edge produces a non-outerplanar graph. The main results obtained in this paper are the following.
\begin{itemize}
  \itemsep 0 pt
  \item There are exactly two MOPs $G$ with $\beta'(G)=3$.
  \item If $G$ is a MOP of order at least 7 with $\beta(G)=2$, then $4\le \beta'(G) \le 6$.
  \item If $G$ is a MOP of order $n\ge 5$, then $\beta'(G) \le \left \lceil \frac{2n}{3} \right \rceil$. Furthermore, a FTRS of size $\cn$ can be built in linear time.
\end{itemize}

The paper is organized as follows. Section~\ref{sec:three} is devoted to studying MOPs $G$ with $\beta'(G)=3$, and Section~\ref{sec:four} to establish lower and upper bounds of $\beta'(G)$ for a MOP $G$ with $\beta(G)=2$. 
In Section~\ref{sec:upper}, we prove that $\beta'(G) \le \left \lceil \frac{2n}{3} \right \rceil$, if $G$ is a MOP of order $n\ge 5$. Finally, in Section~\ref{sec:conclusion}, we present some conclusions and several open problems. The rest of this section reviews several well-known properties of MOPs.

\subsection{Basic properties of MOPs}\label{sec:basicmops}

\begin{figure}[!htb]
	\centering
	\subfigure[A MOP of order 17 that contains two separating triangles and four vertices of degree 2. One of these four vertices is adjacent to a separating triangle. The dual graph is dotted.]{
		\includegraphics[scale=1, page=1]{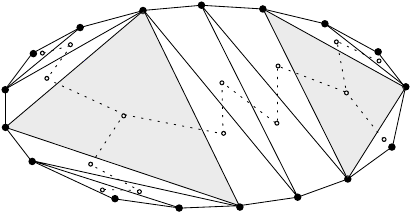}
		\label{fig:mopexample}
	}~~~~~~~~~~~
	\subfigure[A zigzag of order 12. Any of the pairs $\{u, v\}$ and $\{u', v'\}$ forms a metric basis.]{
		\includegraphics[scale=0.9, page=2]{Figures.pdf}
		\label{fig:zigzag}
	}
    \caption{Examples of MOPs.} \label{fig:mops}
\end{figure}

A MOP $G$ is biconnected, Hamiltonian, and always admits a plane embedding such that all vertices belong to the unbounded face and every bounded face is a triangle. 
Thus, $G$ can be seen as a triangulation of a simple (or convex) polygon; see Figure~\ref{fig:mopexample} for an illustration. The boundary of the unbounded face of $G$ is called {\em the boundary} of $G$.  
Every edge on the boundary of $G$ (called {\em boundary edge}) belongs to exactly one triangle of $G$, while any other edge (called {\em diagonal}) belongs to two triangles. A triangle consisting of three diagonals is a {\em separating triangle}, and the degree of its vertices in $G$ is at least four. 

The dual graph $T=(V', E')$ of a MOP $G$ (vertices of $V'$ correspond to faces in $G$ and two vertices of $T$ are connected if and only if their corresponding faces in $G$ are adjacent) is a tree, whose vertices have degree at most 3; see Figure~\ref{fig:mopexample}.  From this duality, the next properties of $G$ readily follow. 

\begin{enumerate}[label=\textbf{(B\arabic*)}, leftmargin=*]
		
	\item\label{Basic1} $G$ has at least two vertices of degree 2. Moreover, if $n\ge 4$, then the graph obtained by removing a vertex of degree 2 is also a MOP. 

	\item\label{Basic2}   The number of vertices of degree 2 in $G$ is equal to the number of separating triangles of $G$ plus 2.

    \item\label{Basic3}  Every vertex of degree 2 of $G$ has either a neighbor of degree 3 or the diagonal joining its two neighbors belongs to a separating triangle.

    \item\label{Basic4}   If $n\ge 5$, a vertex of degree 3 of $G$ is adjacent to at most one vertex of degree 2. 

    \item\label{Basic5}   If $n\ge 4$ and $G$ contains no separating triangles, then $G$ has at least two vertices of degree 2 and at least two vertices of degree 3.

\end{enumerate}

For example, the MOP shown in Figure~\ref{fig:mopexample} has two separating triangles and four vertices of degree 2. Three of these vertices have a neighbor of degree 3, and these neighbors are all different. There is only one vertex of degree 2 whose neighbors belong to a separating triangle.

A special MOP that we will use later is the zigzag; see Figure~\ref{fig:zigzag}. A \emph{zigzag} is either a triangle or a MOP of order $n\ge 4$ with two vertices of degree 2, two vertices of degree 3, and the remaining vertices, if any, of degree 4. Notice that by Property~\ref{Basic2}, a zigzag cannot contain any separating triangles. Moreover, if the zigzag has at least 4 vertices, there are two different edges with one endvertex of degree $2$ and the other of degree $3$. It is easy to check that the endvertices of such edges form a metric basis.

\section{MOPs with small FTMD}\label{sec:basic}

From the definition of FTMD, the inequality $\beta'(G)\geq \beta(G)+1$ trivially holds for any graph $G$. Moreover, 
$\beta(G)\ge 2$ for any MOP $G$, since $G$ is not a path. Therefore,  $\beta'(G)\ge 3$ for any MOP $G$. 
In this section, we show that there are only two MOPs whose FTMD is 3 and study the possible values of the FTMD for MOPs with metric dimension 2. 

To this end, we first recall the characterization of MOPs with metric dimension 2 given in~\cite{Clav21}, based on an embedding into the strong product of two paths. 
We refer the reader to that paper for further details.

The strong product of two paths of order $n$, $P_n\boxtimes P_n$, has the Cartesian product $[0,n-1]\times [0,n-1]$ as set of vertices and two different vertices $(i,j)$ and $(i',j')$ are adjacent if and only if $|i'-i|\le 1$ and $|j'-j|\le 1$. This graph can be represented in the plane by identifying the vertex $(i,j)$ with the point with Cartesian coordinates $(i,j)$.

Let $G$ be a MOP with metric dimension 2 and let $B=\{u,v\}$ be a metric basis of $G$. Then, $G$ admits an embedding $G^*$ in $P_n\boxtimes P_n$ as follows  (see an example in Figure~\ref{fig:embedded_2}).
Every vertex $w$ of $G$ is identified with the vertex of $P_n\boxtimes P_n$ having Cartesian coordinates $r(w|B)=(d(w,u),d(w,v))$. 
Hence, if $d(u,v)=d$, then $u$ is identified with $(0,d)$, $v$ is identified with $(d,0)$ and the vertices lying on the shortest path from $u$ to $v$ are identified with $(i, d-i)$, $i\in \{0,\dots ,d\}$. 
For $i=0, \ldots, d-1$, the triangle with vertices  $(i,d-i)$, $(i+1, d-i-1)$ and $(i+1,d-i)$ belongs to $G^*$. We refer to the subgraph induced by these triangles as the \emph{spine} of ${G^*}$, 
as shown in gray in 
Figure~\ref{fig:embedded_2}.

\begin{figure}[!htb]
	\centering
    \subfigure[On the left, the graphs in $\mathcal{H}$. Solid edges are mandatory. On the right, how to glue them to the spine. Also, a zigzag can be glued to the edges $(0,d)(1,d)$ and $(d,0)(d,1)$.]{
        \includegraphics[scale=0.48, page=3]{Figures.pdf}
		\label{fig:embedded_1}
	}~~
	\subfigure[An embedding $G^*$ of a MOP $G$ with metric dimension $2$. In gray, the spine. A zigzag and some graphs from $\mathcal{H}$ are glued to the spine.
    ]{
        \includegraphics[scale=0.48, page=4]{Figures.pdf}
		\label{fig:embedded_2}
   }
    \caption{Embedding a MOP $G$ with metric dimension 2 into $P_n\boxtimes P_n$.
    }
    \label{fig:embedded}
\end{figure}

Now, let $\mathcal{H}$ be the family of MOPs   
with two distinguished edges, $e$ and $e'$, 
as shown in Figure~\ref{fig:embedded_1}. 
These graphs consist of either (i) only a triangle or (ii) a MOP of order 4 and optionally a zigzag and/or a triangle attached to consecutive edges of the MOP.
Let $i\in \{1,\dots, d-1\}$ and let $e_i$ and $e_i'$ be the two edges of the spine of $G^*$ incident to $(i,d-i)$, not belonging to the shortest path between $u$ and $v$.
For each $i\in \{1,\dots, d-1\}$, a subgraph belonging to the family $\mathcal{H}$ must be glued to the spine 
as a subgraph of $G^*$, 
by identifying the edge
$e_i$ with $e$
and the edge $e'_i$ with $e'$, or vice versa.
We refer to this subgraph of $G^*$ as $H_i$. 
Also, a zigzag can be glued to the edges $(0,d)(1,d)$ and $(d,0)(d,1)$, as shown in Figure~\ref{fig:embedded_1}.
We refer to these zigzags as $H_0$ and $H_d$, respectively. 
If no zigzag is glued to the edge $(0,d)(1,d)$ (resp. $(d,0)(d,1)$), we consider that $H_0$ (resp. $H_d$) is the subgraph of order 2 induced by this edge. 
In addition, the vertices of $G^*$ not in the spine must belong to exactly one of these subgraphs  $H_0,\dots, H_d$ (so, two distinct subgraphs $H_i$, $H_j$ 
do not share any vertex, except the vertex $(i+1,d-i)$ when $j=i+1$).
See an example in Figure~\ref{fig:embedded_2}.

From this embedding $G^*$ of $G$, the next properties easily follow.

\begin{enumerate}[label=\textbf{(P\arabic*)}, leftmargin=*]
		
	\item\label{pro:p1}  The vertices $u$ and $v$ have degree 2 or 3. 

	\item\label{pro:p2}  There is only a shortest path between $u$ and $v$. Moreover, the vertices of this shortest path are consecutive on the boundary of $G$ and its internal vertices have degree 4 or 5. 

    \item\label{pro:p3}  A pair of vertices $z_1$, $z_2$ is resolved by $u$ if and only if 
  $z_1$ and $z_2$ have distinct $ x$-coordinates in $G^*$, i.e., they lie on different vertical lines. 

    \item\label{pro:p4}  A pair of vertices $z_1$, $z_2$ is resolved by $v$ if and only if 
  $z_1$ and $z_2$ have distinct $ y$-coordinates in $G^*$, i.e., they lie on different horizontal lines. 

\end{enumerate}

We remark that when computing the FTMD of a MOP with metric dimension 2, the vertices of degree 5 on the shortest path between the two vertices of a metric basis will play an important role.  Given a metric basis $B$, we denote the number of vertices of degree 5 by $r(G)$.  
Notice that $r(G)$ can vary, depending on the chosen metric basis $B$.

\subsection{MOPs with FTMD equal to three}\label{sec:three}

We begin by analyzing all MOPs of order at most 6 (see Figure~\ref{fig:littleMOPs}).
It is easy to check that all of them have metric dimension 2
and FTMD equal to 4, except for the only MOP of order 3 (Figure~\ref{fig:littleMOPs_1})  
and one MOP of order 6 
(Figure~\ref{fig:littleMOPs_6}), which have FTMD equal to 3. 

\begin{figure}[!htb]
	\centering
	\subfigure[$\beta'(G)=3$.]{
		\includegraphics[scale=0.78, page=5]{Figures.pdf}
		\label{fig:littleMOPs_1}
	}~
    \subfigure[$\beta'(G)=4$.]{
	   \includegraphics[scale=0.78, page=6]{Figures.pdf}
	   \label{fig:littleMOPs_2}
	}~
	\subfigure[$\beta'(G)=4$.]{
		\includegraphics[scale=0.78, page=7]{Figures.pdf}
		\label{fig:littleMOPs_3}
	}~
	\subfigure[$\beta'(G)=4$.]{
		\includegraphics[scale=0.78, page=8]{Figures.pdf}
		\label{fig:littleMOPs_4}
	}~
	\subfigure[$\beta'(G)=4$.]{
		\includegraphics[scale=0.78, page=9]{Figures.pdf}
		\label{fig:littleMOPs_5}
	}~
	\subfigure[$\beta'(G)=3$.]{
		\includegraphics[scale=0.78, page=10]{Figures.pdf}
		\label{fig:littleMOPs_6}
	}
        \caption{For all MOPs $G$ of order at most 6, $\beta(G)=2$. 
        The circled vertices form a metric basis. These vertices, together with the other black vertices, form a FTMB of the MOP.}   \label{fig:littleMOPs}
\end{figure}

\begin{lemma}\label{lem:T=3}
Let $G$ be a MOP and let $S$ be the set of vertices of $G$ of degree 2 or 3. If $\beta'(G)=3$, then $|S|=3$.
\end{lemma}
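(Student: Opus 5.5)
Let $W$ be a fault-tolerant resolving set with $|W|=3$. The plan is to show two inclusions, $S\subseteq W$ and $W\subseteq S$, which together give $|S|=|W|=3$. The order $n=3$ is trivial, since then $S=V(G)$ has three vertices. So assume $n\ge 4$. Since $W$ is fault-tolerant, every $2$-subset of $W$ is a metric basis of $G$. By~\ref{pro:p1}, every vertex of a $2$-element metric basis of a MOP has degree 2 or 3, and each $w\in W$ lies in some $2$-subset. Hence $W\subseteq S$, and in particular $|S|\ge 3$.

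For the reverse inclusion, take any $s\in S$ and suppose $s\notin W$. The neighbourhood of $s$ induces a small structure: a vertex of degree 2 lies in one triangle, and a vertex of degree 3 lies in two triangles sharing an edge $sx$. I would look for a vertex $z$ adjacent to $s$ that is a ``twin with respect to $W$'' except through $s$. That is, $d(w,z)$ and $d(w,s)$ should differ for at most one $w\in W$. This would contradict fault tolerance.

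\emph{Degree-2 case.} If $s$ has degree 2 with neighbours $a,b$, then $a$ and $b$ are adjacent. For any $w\ne s$, a shortest path from $w$ to $s$ passes through $a$ or $b$. So $d(w,s)=\min(d(w,a),d(w,b))+1$. Take $z=a$ or $z=b$ according to which is closer. The cleanest route, though, is to pair $s$ with the common neighbour: on any shortest path $d(w,s)\in\{d(w,a)+1,\,d(w,a)\}$. Then count the vertices $w\in W$ resolving $s$ and $a$. These are exactly the $w$ with $d(w,b)<d(w,a)+\ldots$. Because $W\subseteq S$ contains at most one vertex in each ``side'', at most one $w$ resolves the pair $\{s,a\}$ or the pair $\{s,b\}$.

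\emph{Degree-3 case and expected obstacle.} For $s$ of degree 3 the argument is the same in spirit, applied to the pair $\{s,x\}$ where $x$ is the vertex shared by both triangles at $s$. The main obstacle is making this counting rigorous. I would first try the embedding into $P_n\boxtimes P_n$ relative to the basis $B=\{w_1,w_2\}\subseteq W$. Properties~\ref{pro:p3} and~\ref{pro:p4} translate ``resolved by $w_1$'' and ``resolved by $w_2$'' into differing coordinates. The structure of $\mathcal H$ and the spine then forces every degree-2 or degree-3 vertex $s\notin B$ to share a coordinate with a neighbour. Concretely, $s$ is either a tip of some $H_i$ or an end of a glued zigzag. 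So $s$ and that neighbour are resolved by only one of $w_1,w_2$, and hence must be resolved by the third vertex $w_3$. Repeating this with the other two bases $\{w_1,w_3\}$ and $\{w_2,w_3\}$ shows that every $s\in S$ not in $W$ would need to be simultaneously distinguished in incompatible ways. Combined with~\ref{Basic2}--\ref{Basic5}, which bound how degree-2 and degree-3 vertices appear, this yields $S\subseteq W$ and thus $|S|=3$. I expect the case analysis over the embedding (zigzag ends versus the gluings in $\mathcal H$) to be the laborious step.
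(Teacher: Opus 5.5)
Your first half matches the paper: every $2$-subset of $W$ is a metric basis, so Property~\ref{pro:p1} gives $W\subseteq S$. It then suffices to show $S\subseteq W$, and this is where there is a genuine gap, because that inclusion is never proved.

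\begin{itemize}
\item The degree-2 paragraph trails off. The condition ``$d(w,b)<d(w,a)+\ldots$'' is never completed, and ``at most one vertex in each side'' is neither defined nor justified.
\item The degree-3 case is deferred to ``the same in spirit''.
\item The embedding paragraph ends with ``incompatible ways'' without naming any actual contradiction.
\end{itemize}

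A purely local twin argument cannot close this as you set it up. Take $s\notin W$ of degree 2 with neighbours $a,b$, and put $\delta_w=d(w,a)-d(w,b)\in\{-1,0,1\}$. Then $d(w,s)=\min\{d(w,a),d(w,b)\}+1$. So $w$ fails to resolve $\{s,a\}$ exactly when $\delta_w=1$, fails on $\{s,b\}$ exactly when $\delta_w=-1$, and fails on $\{a,b\}$ exactly when $\delta_w=0$. Fault tolerance on these three pairs only forces $\delta_w$ to take each value $-1,0,1$ exactly once over $w\in W$, which is perfectly consistent. Likewise, exhibiting a pair that only one of $w_1,w_2$ resolves just says $w_3$ must resolve it; that is not a contradiction either.

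The missing idea is global: Property~\ref{pro:p2}. The unique shortest path between the two vertices of a metric basis runs along consecutive boundary vertices, and its internal vertices all have degree 4 or 5. With this, the proof is one line, which is exactly the paper's argument.

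\begin{itemize}
\item Suppose $x\in S\setminus W$.
\item Label $W=\{u,v,w\}$ in cyclic order on the boundary so that $x$ lies on the boundary arc from $u$ to $v$ that avoids $w$.
\item Each of the two boundary $u$--$v$ arcs then has an internal vertex of degree 2 or 3: $x$ on one, $w$ on the other.
\item So neither arc can be the shortest path required by Property~\ref{pro:p2}, contradicting that $\{u,v\}$ is a metric basis.
\end{itemize}

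No case analysis over zigzags or the family $\mathcal{H}$ is needed.
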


\begin{proof}
Let $W=\{u,v,w\}$ be a FTMB of $G$. 
Since any two vertices in $W$ form a metric basis of $G$, the vertices $u, v$ and $w$ have degree 2 or 3 by Property~\ref{pro:p1}. Thus, $|S|\ge 3$.
Now, suppose that $|S|\ge 4$. In such a case, there is a vertex $x$ of degree 2 or 3 that does not belong to $W$. 
Suppose that $u,v,w$ appear in this order along the boundary of $G$
and assume, without loss of generality, that $x$ is between $u$ and $v$. 
Thus, along the boundary of $G$, $x$ is between $u$ and $v$, and $w$ is between $v$ and $u$. 
Since $w$ and $x$ have degree 2 or 3, $u$ and $v$ cannot form a metric basis by Property~\ref{pro:p2}, contradicting that  $W$ is a  FTMB.
Therefore, $|S|=3$.
\end{proof}

\begin{theorem} \label{the:betap3}
Let $G$ be a MOP of order at least 3. Then, $\beta'(G)=3$ if and only if $G$ is one of the MOPs shown in Figures~\ref{fig:littleMOPs_1} and~\ref{fig:littleMOPs_6}. 
\end{theorem}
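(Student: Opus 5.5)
The sufficiency direction is the small-case check already recorded for Figure~\ref{fig:littleMOPs}: the triangle and the distinguished MOP of order 6 each have $\beta'(G)=3$. For the triangle, any two vertices resolve every pair, so $V(G)$ is a FTRS. For the order-6 MOP, the three black vertices in Figure~\ref{fig:littleMOPs_6} can be checked pairwise to be metric bases. No MOP has $\beta'(G)<3$, because $\beta'(G)\ge\beta(G)+1\ge 3$.

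For necessity, assume $\beta'(G)=3$ with FTMB $W=\{u,v,w\}$ and $n\ge 4$. By Lemma~\ref{lem:T=3}, $S=W$ consists of exactly three vertices of degree 2 or 3, and every other vertex has degree at least 4. By \ref{Basic1}, $G$ has at least two vertices of degree 2. By \ref{Basic5}, if $G$ has no separating triangle then it has at least two vertices of degree 2 and at least two of degree 3, so $|S|\ge 4$. Hence $G$ has a separating triangle, and by \ref{Basic2} it has at least three vertices of degree 2. Since $|S|=3$, $G$ has exactly three vertices of degree 2, exactly one separating triangle, and no vertex of degree 3.

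By \ref{Basic3}, each degree-2 vertex, having no neighbor of degree 3, has its two neighbors forming an edge of the unique separating triangle $xyz$. So $G$ is the triangle $xyz$ with an ear on each of its three edges. Removing the separating triangle splits the dual tree into three branches. A branch that is more than a single triangle would produce, at its far end, a degree-2 vertex whose neighbors are not an edge of $xyz$, or else a degree-3 vertex. Either way this contradicts what we just derived, so each branch is a single triangle. Hence $n=6$ and $G$ is the MOP of Figure~\ref{fig:littleMOPs_6}.

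The delicate step is this last structural argument, making rigorous that each branch consists of just one triangle. I would handle it by induction via \ref{Basic1}: pick a branch with more than one triangle. Its leaf triangle in the dual tree contains a degree-2 vertex whose opposite diagonal is not an edge of $xyz$. Since $xyz$ is the only separating triangle, \ref{Basic3} forces that vertex to have a neighbor of degree 3, which contradicts $|S|=3$ together with the degree counts above. Alternatively, since $n$ is then forced small, one can simply invoke the exhaustive check of all MOPs of order at most 6.
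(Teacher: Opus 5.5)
Your proof is correct and follows the paper's route: Lemma~\ref{lem:T=3} gives $|S|=3$, Properties~\ref{Basic5} and~\ref{Basic2} then force exactly one separating triangle, exactly three vertices of degree 2 and none of degree 3, and Property~\ref{Basic3} pins $G$ down as the MOP of order 6. The differences are cosmetic. You run the argument for all $n\ge 4$ rather than for $n\ge 7$ plus the exhaustive small-case check, and your dual-tree branch argument correctly spells out the step the paper compresses into ``implying that $G$ has order 6''. Your closing ``alternatively, invoke the exhaustive check'' remark is circular, since nothing yet bounds $n$, and should be dropped, but nothing depends on it.
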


\begin{proof}
The only MOPs of order at most 6 with FTMD equal to 3 are precisely the MOPs shown in Figures~\ref{fig:littleMOPs_1} and~\ref{fig:littleMOPs_6}. 
Thus, it only remains to prove that a MOP of order at least 7 has FTMD at least 4.

Let $G$ be a MOP of order at least 7 and assume, to the contrary, that $\beta'(G)=3$.
By Lemma~\ref{lem:T=3}, $G$ has exactly three vertices of degree 2 or 3. 
Therefore, $G$ has at most one separating triangle by Property~\ref{Basic2}. Furthermore, by Property~\ref{Basic5}, $G$ must contain separating triangles because otherwise, $G$ would have at least four vertices of degree 2 or 3. As a consequence, $G$ contains exactly one separating triangle. In such a case, $G$ has exactly three vertices of degree 2 and no vertices of degree 3. By Property~\ref{Basic2}, the diagonals joining the two neighbors of each vertex of degree 2 belong to a separating triangle, implying that $G$ has order 6, which contradicts our assumption. 
\end{proof}

\subsection{Tight bounds on the FTMD of MOPs with metric dimension 2}
\label{sec:four}

In this section, we prove tight lower and upper bounds on the FTMD of MOPs with metric dimension 2. Specifically, we prove that $4\le \beta'(G)\le 6$ if $G$ is a MOP with metric dimension 2, except for the two MOPs illustrated in Figures~\ref{fig:littleMOPs_1} and~\ref{fig:littleMOPs_6}. 
Moreover, we sharpen these bounds in terms of $r(G)$, after choosing a metric basis $B$ of $G$. 
Throughout this section, we assume that a MOP $G$ has metric dimension 2, $B=\{u,v\}$ is a metric basis of $G$ with $d(u,v)=d$, and $G$ is embedded on $P_n\boxtimes P_n$ 
as explained previously. Moreover, given a vertex $z\in G$, we use $\deg(z)$ to denote its degree in $G$. 

We begin with the following special case. 

\begin{lemma}\label{lem:AuAvx0}
Let $G$ be a MOP of order $n\ge 7$, with $\beta(G)=2$, and let $\{u,v\}$ be a metric basis of $G$. If $d(u,v)=1$, then $\beta'(G)=4$.
\end{lemma}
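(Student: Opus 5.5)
Since $n\ge 7$, Theorem~\ref{the:betap3} already gives $\beta'(G)\ge 4$, so the whole task is the upper bound $\beta'(G)\le 4$. I would construct an explicit fault-tolerant resolving set of size 4. The sufficient criterion I will check is this: if $W=\{u,v,a,b\}$ and both $\{u,v\}$ and $\{a,b\}$ are metric bases, then every $3$-subset of $W$ contains one of these two bases. Hence $W\setminus\{w\}$ is resolving for every $w\in W$, and $W$ is a FTRS.

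First I would pin down the structure of $G$ using the embedding $G^*$ in $P_n\boxtimes P_n$. When $d=1$ the range $i\in\{1,\dots,d-1\}$ is empty, so there are no subgraphs $H_i$ from $\mathcal H$. The spine is the single triangle $u=(0,1)$, $v=(1,0)$, $c=(1,1)$. Consequently $G$ is this triangle with a zigzag $H_0$ glued on the edge $uc$ and a zigzag $H_1$ glued on the edge $vc$, where either may be degenerate (just the edge). Since $uv$ is a boundary edge, the dual tree of $G$ is a path. So $G$ is a chain of triangles: one zigzag, then the triangle $uvc$, then another zigzag. Because $n\ge 7$, at least one zigzag is nontrivial.

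Next I would choose $a$ and $b$ to be the two "far" ends of this chain. Let $a$ be the vertex of degree 2 of $H_0$ other than (the possible degree-2 vertex) $u$, and let $b$ be the analogous vertex of $H_1$. If a zigzag is degenerate, I take the corresponding end to be $c$'s neighbour on that side or an appropriate degree-3 vertex instead. I would then compute the distance vectors $r(z\mid\{a,b\})$ directly. In a zigzag, distances from an end vertex grow by one every two consecutive vertices along the chain, in a strictly alternating pattern. Gluing two zigzags through $c$ makes the $a$–$b$ shortest path run along one side of the whole chain. From this I would read off that the map $z\mapsto (d(z,a),d(z,b))$ is injective. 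Equivalently, the chain is itself embedded in $P_n\boxtimes P_n$ with respect to $\{a,b\}$ as in the characterization of~\cite{Clav21}. This would show that $\{a,b\}$ is a metric basis; it also keeps $a,b\notin\{u,v\}$, as needed for $|W|=4$.

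The main obstacle is the verification that $\{a,b\}$ resolves $G$, together with the degenerate cases. These arise when one of $H_0,H_1$ is just an edge, or a zigzag of order 3 or 4 whose far end coincides with or is adjacent to $u$, $v$ or $c$. There the naive choice of $a$ (or $b$) may give a pair that fails to resolve two vertices near $c$. My first attempt would be to handle the general case by the alternating distance pattern along the chain. In the small cases I would swap $a$ for the degree-3 neighbour of the far degree-2 vertex, which is the other metric-basis pair of a zigzag noted in Section~\ref{sec:basicmops}. If that still fails, I would fall back to checking directly, via Properties~\ref{pro:p3} and~\ref{pro:p4}, that each pair with equal $x$-coordinate (respectively, equal $y$-coordinate) in $G^*$ is resolved by at least one of $a$ and $b$.
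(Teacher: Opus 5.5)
Your overall strategy matches the paper's: the lower bound comes from Theorem~\ref{the:betap3}, and the upper bound comes from a second metric basis disjoint from $\{u,v\}$. The gap is in the step that carries the proof, namely the choice of $\{a,b\}$ and the claim that $z\mapsto(d(z,a),d(z,b))$ is injective.

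\textbf{First gap: the shape of $G$.} You never determine it, and the ``general case'' you plan around, two nontrivial zigzags meeting at $c$, cannot occur. Since $\deg(u)\le 3$, a nontrivial $H_0$ begins with $y_1$ adjacent to $u,c$, followed by $y_2$ adjacent to $c,y_1$. This is also the reason the dual is a path, not merely that $uv$ is a boundary edge. Likewise, a nontrivial $H_1$ begins with $z_1$ adjacent to $v,c$, then $z_2$ adjacent to $c,z_1$. If both $y_2$ and $z_2$ existed, both would have vector $(2,2)$ with respect to $\{u,v\}$. So at most one side has order at least $4$. For $n\ge 7$, $G$ is therefore either a zigzag, or a zigzag $u,c,y_1,\dots,y_m$ on $uc$ (each vertex adjacent to the two preceding ones) plus a single ear $b$ on $vc$, or the symmetric graph. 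This is exactly the dichotomy the paper's proof rests on.

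\textbf{Second gap: the choice of $\{a,b\}$ in the ear case.} Your choice of the two far degree-2 vertices $\{b,y_m\}$ fails whenever $m$ is odd. Already for $n=7$ ($m=3$, edges $uv,uc,vc,vb,cb,uy_1,cy_1,cy_2,y_1y_2,y_1y_3,y_2y_3$), $\{u,v\}$ is a metric basis. Yet $y_1$ and $y_2$ are both at distance $2$ from $b$ (through $c$) and at distance $1$ from $y_3$.

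Your fix of swapping in the degree-3 neighbour $y_{m-1}$ repairs odd $m$ but breaks even $m$. For $m=4$, $y_1$ and $y_2$ both get $(2,1)$ with respect to $\{b,y_3\}$. The obstruction is parity, not smallness. The partner of $b$ must be the one of $y_{m-1},y_m$ with even index, i.e., the one on the boundary path $b,c,y_2,y_4,\dots$. Then the $b$--$a$ geodesic is unique, as Property~\ref{pro:p2} requires, and injectivity can be checked.

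\textbf{Zigzag case.} The correct second pair is the far degree-2 vertex together with its degree-3 neighbour. Your alternative ``$c$'s neighbour on that side'' is $v$, which is already in $W$.

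Without the classification and this parity-dependent choice, your fallback of checking pairs via Properties~\ref{pro:p3} and~\ref{pro:p4} cannot succeed: for the wrong pair the check simply fails.
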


\begin{proof}
Since $d(u,v)=1$, the spine of $G$ is only a triangle. 
Then $G$ is a zigzag or the graph shown in Figure~\ref{fig:lema2} (left) or its symmetric.
If $G$ is a zigzag, the set formed by the four vertices of degree 2 and 3 is obviously a FTRS. Thus, $\beta'(G)\le 4$ and,
by Theorem~\ref{the:betap3}, $\beta'(G)= 4$.

In the case of the MOP shown in Figure~\ref{fig:lema2} (left), 
it can be easily checked that $\{u',v'\}$ is also a metric basis, by properly embedding the graph into $P_n\boxtimes P_n$ (see Figure~\ref{fig:lema2} (right)).
Hence, the set $\{u,v,u',v'\}$ is a FTRS of $G$. Thus, $\beta'(G)\le 4$ and,
by Theorem~\ref{the:betap3}, $\beta'(G)= 4$.
\end{proof}

\begin{figure}[!htb]
	\centering	
  \includegraphics[scale=0.45, page=11]{Figures.pdf}
 \hspace{20mm}
  \includegraphics[scale=0.45, page=12]{Figures.pdf}
    \caption{A MOP $G$ with $d(u,v)=1$, distinct from a zigzag, and two different embeddings of $G$ into $P_n\boxtimes P_n$. }
    \label{fig:lema2}
\end{figure}

Before proving new results, we introduce some notation for $d\ge 2$ (see Figure~\ref{fig:definition}).
The vertex $(1,d-1)$ is denoted by $u_1$, the vertex $(1,d)$ by $u_2$, and the vertex $(1,d+1)$ of $P_n\boxtimes P_n$ by $u_3$. 
The vertex with the largest integer $k\ge 0$ such that $(1+k, d+k)$ belongs to $G$ is denoted by $u_4$, and the vertex $(1+k, d-1+k)$ 
with the largest integer $k\ge 0$ such that $(1+k, d-1+k)$ belongs to $G$ by $u_5$. Note that these five vertices belong to $G$, except possibly $u_3$. 
Also note that, depending on $G$, $u_1$ and $u_5$ can coincide, and $u_2$ and $u_4$ can also coincide. 
Vertices $v_1, v_2, v_3, v_4$ and $v_5$ are defined similarly around $v$, as illustrated in
Figure~\ref{fig:definition}.
In $P_n\boxtimes P_n$, the edge connecting $u_3=(1,d+1)$ and $(2,d+1)$ is denoted by $e_u$.
Similarly, the edge connecting $v_3=(d+1,1)$ and $(d+1,2)$ is denoted by $e_v$.
These two edges of $P_n\boxtimes P_n$ may not belong to $G$. 
We point out that if $d=2$, then  $u_1=v_1$ and $u_5=v_5$.

\begin{figure}[!htb]
\centering
\includegraphics[scale=0.55, page=13]{Figures.pdf}
\caption{Definition of some vertices and edges. 
} \label{fig:definition}
\end{figure}

\begin{lemma}\label{lem:AuAvx1}
Let $G$ be a MOP with $\beta(G)=2$, and let $\{u,v\}$ be a metric basis of $G$ with $d(u,v)=d\ge 2$. The following statements hold.
\begin{itemize}
\itemsep 0pt
  \item[(i)] If $\deg(u_1)=4$, then $S_1=\{u_1,u_4,v\}$ is a resolving set of $G$.
  \item[(ii)] If $\deg(v_1)=4$, then $S'_1=\{v_1,v_4,u\}$ is a resolving set of $G$.
\end{itemize}
\end{lemma}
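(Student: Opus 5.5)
By the symmetry of the embedding $G^*$ (exchanging the roles of $u$ and $v$, i.e.\ reflecting along the diagonal $x=y$), statement (ii) follows from (i). So I only prove (i).

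\textbf{Local structure at $u_1$.} The spine already gives $u_1=(1,d-1)$ four neighbours: $u=(0,d)$, $u_2=(1,d)$, $(2,d-2)$ and $(2,d-1)$. Hence $\deg(u_1)=4$ forces $H_1$ to be the single triangle of $\mathcal{H}$. So $H_1$ contributes only the vertex $(2,d)$, adjacent to $u_2$ and $(2,d-1)$. Consequently the only vertices of $G$ near $u$ are $u_1$, $u_2$, $(2,d)$ and the vertices of the zigzag $H_0$ glued to $uu_2$. This zigzag runs upward along the two diagonals $\{(k,d+k)\}$ and $\{(1+k,d+k)\}$, and $u_4$ is its last vertex on the diagonal through $u_2$.

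\textbf{Step 1: outside $H_0$, $u_1$ acts like $u$.} Let $z\notin V(H_0)$ with $z\neq u$. Every path from $u$ to $z$ leaves through $u_1$ or $u_2$. Every neighbour of $u_2$ outside $H_0$ other than $u_1$ is $(2,d)$ or $(2,d-1)$, and both are adjacent to, or reachable as fast through, $u_1$. From this I will deduce
\[
d(z,u_1)=d(z,u)-1 .
\]
By Property~\ref{pro:p3}, $u_1$ therefore resolves two such vertices exactly when their $x$-coordinates differ. Combined with Property~\ref{pro:p4} for $v$, the pair $\{u_1,v\}$ resolves every pair of vertices outside $H_0\setminus\{u,u_2\}$, because $\{u,v\}$ does. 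The vertices $u,u_2$ need a small direct check, using that $u_1$ is adjacent to both and $v$ separates rows.

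\textbf{Step 2: pairs meeting $H_0$.} Here I use $u_4$. The zigzag $H_0$ is a strip of width one between two diagonals, and $u_4$ sits at its far end. So the distance from $u_4$ decreases strictly, by one per level, as we move up the strip. Two vertices of $H_0$ on the same horizontal line (same $v$-distance) lie at different positions of the strip, and $u_4$ separates them.

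For a pair with $z_1\in H_0$ and $z_2\notin H_0$ on the same row $y=j\ge d$, only the vertex $(2,d)$ or the vertices on row $d$ can occur outside $H_0$. I plan to compare explicitly: $z_2$ is at distance at least $d(u_2,u_4)$ from $u_4$, while $z_1$ is strictly closer. Otherwise $u_1$ already separates them, since $d(z_1,u_1)$ and $d(z_2,u_1)$ have different relations to their $u$-distances.

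\textbf{Main obstacle.} The delicate part is Step 2, namely the mixed pairs, together with the boundary cases where $u_4=u_2$ (no zigzag) or $d=2$ (where $u_1=v_1$). I would handle these by listing the few vertices on rows $d$ and $d+1$ explicitly and checking their distance vectors to $S_1$ directly.
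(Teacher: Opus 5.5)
Your overall plan is the same as the paper's: use $v$ to reduce to pairs on a common row, show $d(z,u_1)=d(z,u)-1$ on a large region, and let $u_4$ handle the rest. However, the structural claim on which Step~1 rests is false, and this leaves a whole family of pairs unhandled.

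The hypothesis $\deg(u_1)=4$ does not force $H_1$ to be a single triangle. $H_1$ may be an order-$4$ MOP on $u_1,u_2,(2,d),(2,d-1)$ whose diagonal is $u_2(2,d-1)$, which avoids $u_1$. The order-$6$ MOP with a separating triangle, with $u,v$ two of its degree-$2$ vertices, is of this kind. Further triangles may be attached to it, producing vertices on the diagonals $\{(2+k,d+k)\}$ and $\{(3+k,d+k)\}$. Your text is also inconsistent here: a single triangle glued along $e_1,e'_1$ adds no vertex, yet you let it contribute $(2,d)$.

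Whenever $(2,d)$ is present, your identity fails at $z=(2,d)$. This vertex is reached from $u$ in two steps through $u_2$ but is not adjacent to $u_1$, so $d(u_1,(2,d))=2=d(u,(2,d))$; it is not ``reachable as fast through $u_1$''. More generally $d(u_1,(2+k,d+k))=d(u,(2+k,d+k))=k+2$, and $d(u_1,(3+k,d+k))=k+2$ as well. Hence each pair $\{(2+k,d+k),(3+k,d+k)\}$ lies on one row and is resolved neither by $v$ nor by $u_1$.

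Step~1 wrongly asserts that $u_1$ resolves these pairs because their $x$-coordinates differ. Step~2 never examines them, since they lie outside $H_0$. Your claim that only $(2,d)$ and row-$d$ vertices share a row $y\ge d$ with $H_0$ also fails once such a strip is attached.

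Concretely, take $d=3$, $H_0$ trivial, $H_1$ the order-$4$ MOP above together with the triangle $(2,3),(2,2),(3,3)$, and $H_2$ a triangle. This is a MOP of order $9$ with metric basis $\{u,v\}$ and $\deg(u_1)=4$. The vertices $(2,3)$ and $(3,3)$ are both at distance $2$ from $u_1$ and at distance $3$ from $v$.

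What is missing is the exact list of same-row pairs not resolved by $u_1$. As the paper shows, these are precisely $\{(k,d+k),(1+k,d+k)\}$ and $\{(2+k,d+k),(3+k,d+k)\}$, $k\ge 0$, whenever these vertices exist. One must then verify that $u_4$ resolves every one of them.

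For the second family when $u_4\in V(H_0)$, use that $\{u,u_2\}$ separates $u_4$ from these vertices and that $uu_2$ is an edge. Together with $d(u_2,(2+k,d+k))=k+1<k+2\le d(u_2,(3+k,d+k))$, this gives $d(u_4,(2+k,d+k))=d(u_4,u_2)+k+1<d(u_4,(3+k,d+k))$.

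Note also that $u_4$ need not lie in $H_0$. If $H_0$ is trivial, a triangle or zigzag of $H_1$ attached to $u_2(2,d)$ can contain $(2,d+1)$ and further vertices of the diagonal through $u_2$. So your picture of $u_4$ as the far end of the zigzag $H_0$ must be revised as well.
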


\begin{proof} 
We will prove only $(i)$, as the proof of $(ii)$ is analogous by symmetry.
By Property~\ref{pro:p4}, any pair of vertices of $G$ is resolved by $v$, except if they have the same $y$-coordinate. Thus, we only have to show that any pair of vertices with the same $y$-coordinate is resolved by $u_1$ or $u_4$. 
Notice that if $z$ belongs to the gray region $R$ in Figure~\ref{fig:lema3}, then $d(u_1,z)=d(u,z)-1$. 
Hence, two vertices $z_1$ and $z_2$ with the same $y$-coordinate are not resolved by $u_1$, if and only if they are either one of 
the pairs $(k,d+k)$, $(1+k,d+k)$ (both vertices are at distance $k+1$ from $u_1$), 
or one of the pairs $(2+k,d+k)$ and $(3+k,d+k)$ (both vertices at distance $k+2$ from $u_1$), where $k\ge 0$, whenever these vertices belong to $G$.
In all cases, $u_4$ resolves these pairs, regardless of which edges of $P_n\boxtimes P_n$ belong to $G$ (see an illustration in Figure~\ref{fig:lema3}).
\end{proof}

    \begin{figure}[!htb]
	\centering
\includegraphics[scale=0.45, page=14]{Figures.pdf}
\hspace{20mm}
\includegraphics[scale=0.45, page=15]{Figures.pdf}
    \caption{Illustrating Lemma~\ref{lem:AuAvx1}. The vertex $u_4$ resolves pairs of vertices not resolved by $u_1$ (in a rectangle), regardless of which zigzags belong to $G$.}
    \label{fig:lema3}
\end{figure}

\begin{lemma}\label{lem:AuAvx2}
Let $G$ be a MOP with $\beta(G)=2$, and let $\{u,v\}$ be a metric basis of $G$ with $d=d(u,v)\ge 2$. The following statements hold.
\begin{itemize}
\itemsep 0pt
  \item[(i)] If $\deg(u_1)=5$  and $e_u$ does not belong to $G$, then $S_2=\{u_1,u_5,v\}$ is a resolving set of $G$.
  \item[(ii)] If $\deg(u_1)=5$ and $e_u$ belongs to $G$, then $S_3=\{u_2,u_3,v\}$ is a resolving set of $G$.
  \item[(iii)] If $\deg(v_1)=5$ and $e_v$ does not belong to $G$, then $S'_2=\{v_1,v_5,u\}$ is a resolving set of $G$.
  \item[(iv)] If $\deg(v_1)=5$ and $e_v$ belongs to $G$, then $S'_3=\{v_2,v_3,u\}$ is a resolving set of $G$.
\end{itemize}
\end{lemma}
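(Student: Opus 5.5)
By symmetry it suffices to prove (i) and (ii); (iii) and (iv) follow by exchanging the roles of $u$ and $v$, that is, by reflecting the embedding $G^*$ across the diagonal $x=y$. The strategy mirrors Lemma~\ref{lem:AuAvx1}. By Property~\ref{pro:p4}, $v$ resolves every pair of vertices with different $y$-coordinates. So in each case it is enough to show that every pair $z_1,z_2$ with the same $y$-coordinate is resolved by one of the two remaining vertices of $S_2$ (resp. $S_3$).

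The first step is to describe the local structure near $u$ when $\deg(u_1)=5$. The vertex $u_1=(1,d-1)$ has degree 5 exactly when the glued graph $H_1$ from $\mathcal H$ is not just a triangle. In that case $u_1$ is adjacent to $u$, $(2,d-2)$, $u_2=(1,d)$, $(2,d-1)$ and one further vertex. That further vertex is either $(2,d)$ or the vertex $u_3=(1,d+1)$. Which situation occurs, together with whether a zigzag $H_0$ is glued to $(0,d)(1,d)$, determines whether $e_u$ lies in $G$. I would list the finitely many shapes of $H_0\cup H_1$ allowed by Figure~\ref{fig:embedded_1}:
\begin{itemize}
\item $H_1$ being a MOP of order 4, possibly with a zigzag and/or a triangle attached;
\item $H_0$ being absent or a zigzag;
\end{itemize}
and then record which lattice points in the rows $y\ge d-1$ near $x\le 3$ are actually vertices.

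For (i), I would compute $d(u_1,z)$ as in Lemma~\ref{lem:AuAvx1}. On a region $R$ (roughly the points with $x\ge 1$ lying below the diagonal through $u_1$), we have $d(u_1,z)=d(u,z)-1$, so $u_1$ separates any two such points in the same row because $u$ does. The only unresolved same-row pairs therefore come from the upper-left staircase, that is, pairs of the form $(k,d+k),(k+1,d+k)$ or a similar small offset. The vertex $u_5$ is the far end of the diagonal $(1+k,d-1+k)$. Since $e_u\notin G$, the zigzag or the $\mathcal H$-piece above $u_1$ proceeds along that diagonal, and the distances from $u_5$ to the two members of such a pair differ by one. I would verify this by a direct distance computation along the diagonal staircase: walking back from $u_5$ reaches one member one step before the other.

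For (ii), the edge $e_u$ forces the diagonal through $u_2=(1,d)$ and $u_3=(1,d+1)$ to be the one carrying the structure. I would use $u_2$ in place of $u_1$. For $z$ outside a small neighbourhood, $d(u_2,z)$ is $d(u,z)$ shifted by a constant on a suitable region, which again leaves only a short list of same-row ambiguous pairs near the boundary. These pairs are then resolved by $u_3$, which lies one row higher and sees the two points at different distances because $e_u$ joins $u_3$ to $(2,d+1)$.

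The main obstacle is the case bookkeeping. One must confirm that the claimed shift of distances from $u_1$ (resp. $u_2$) holds on the whole relevant region regardless of which zigzags $H_i$, $i\ge 2$, are glued further along. One must also confirm that the exceptional pairs are exactly the listed ones. I would handle this by first showing that distances from any vertex in the first two columns to vertices of $H_i$, $i\ge 2$, are determined by the spine. After that, only the finitely many configurations of $H_0\cup H_1$ need to be checked by hand, ideally with a figure analogous to Figure~\ref{fig:lema3}.
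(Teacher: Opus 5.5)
Your skeleton is the paper's: by Property~\ref{pro:p4} only same-row pairs matter. Then $u_1$ (resp.\ $u_2$) resolves all of them except a specific list of leftovers, and $u_5$ (resp.\ $u_3$) resolves those. The gap is in the local structure, which is the whole content of the lemma, and two of your claims there are wrong.

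First, $u_3=(1,d+1)$ is never a neighbour of $u_1=(1,d-1)$, since adjacent vertices have $y$-coordinates differing by at most one. When $\deg(u_1)=5$, the fifth neighbour is always $(2,d)$, the fourth vertex of the $4$-vertex MOP in $H_1$ with diagonal $u_1(2,d)$.

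Second, whether $e_u\in G$ has nothing to do with $u_1$. Because coordinates are distances to $u$ and $v$, the third vertex of a zigzag glued on $uu_2$ is forced to be $u_3$, and the fourth is forced to be $(2,d+1)$, adjacent to $u_3$. Moreover, $H_0$ and $H_1$ share only $u_2$. Hence $e_u\in G$ if and only if $|V(H_0)|\ge 4$.

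This equivalence is exactly what makes (i) true, and your account of (i) misses it.
\begin{itemize}
\item The pairs you name, $(k,d+k),(k+1,d+k)$, are the pairs along the zigzag $H_0$, and $u_5$ cannot separate them. The set $\{u,u_2\}$ separates $H_0$ from $u_5=(2+K,d+K)\in H_1$. So if $H_0$ were longer, which is precisely the situation $e_u\in G$, then $(2,d+2)$ and $(3,d+2)$ would both be at distance $K+3$ from $u_5$ (and from $u_1$). The step ``walking back from $u_5$ reaches one member one step before the other'' fails for that family.
\item What saves (i) is that $e_u\notin G$ forces $|V(H_0)|\le 3$, so those pairs do not exist. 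The leftovers of $u_1$ are then $\{(0,d),(1,d),(2,d)\}$, $\{(1,d+1),(2,d+1),(3,d+1)\}$ and $\{(1+k,d+k),(2+k,d+k)\}$ for $k\ge 2$ (restricted to vertices of $G$).
\item Apart from $u$, $u_2$ and possibly $u_3$, these vertices lie on $H_1$, whose line $(2+k,d+k)$ ends at $u_5$. Their distances from $u_5$ are pairwise distinct, for instance $K+2,K+1,K$ on row $d$. On the triples you need three distinct values, not a single difference of one.
\end{itemize}

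For (ii), the dual fact is that $(2,d+1)\in H_0$, so $H_1$ cannot attach anything to the edge $u_2(2,d)$.
\begin{itemize}
\item Consequently $H_1$ contributes no pair that $u_2$ fails to resolve. The leftovers of $u_2$ are $(0,d),(2,d)$ and the $H_0$-pairs $(k,d+k),(1+k,d+k)$ for $k\ge 1$.
\item $u_3$ resolves these because it lies on the line $(k,d+k)$ of $H_0$: its distances are $k-1$ and $k$ to each pair, and $1$ and $2$ to $(0,d),(2,d)$. This is the actual reason, not merely that $e_u$ is incident to $u_3$.
\item Also, $u_2$ and $u_3$ are vertically adjacent, not on a common diagonal.
\end{itemize}
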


\begin{proof}
We only prove $(i)$ and $(ii)$, as the proofs for $(iii)$ and $(iv)$ are analogous. 
\medskip

\noindent
{\it Proof of (i).}
As in the proof of Lemma~\ref{lem:AuAvx1}, it is enough to show that any pair of vertices with the same $y$-coordinate is resolved by $u_1$ or $u_5$. 
Notice that if $z$ belongs to the  region $R$ in Figure~\ref{fig:lema4a}, then $d(u_1,z)=d(u,z)-1$.
Hence, two vertices $z_1$ and $z_2$ with the same $y$-coordinate are not resolved by $u_1$ if and only if they both belong to 
one of the sets 
$\{(0,d),(1,d),(2,d)\}$,
$\{(1,d+1),(2,d+1),(3,d+1)\}$, or $\{(1+k,d+k),(2+k,d+k)\}$, for a fixed $k\ge 2$.
Since $e_u$ does not belong to $G$, $u_5$ resolves all possible pairs (see Figure~\ref{fig:lema4a}).

\begin{figure}[!htb]
	\centering
	\subfigure[If $e_u$ does not belong to $G$, then $u_5$ resolves vertices not resolved by $u_1$ (in the same rectangle).]{
        \includegraphics[scale=0.45, page=16]{{Figures.pdf}}
		\label{fig:lema4a}
	}~~~~~~~~~~~~~~
	\subfigure[If $e_u$ belongs to $G$, then $u_3$ resolves vertices not resolved by $u_2$ (in the same rectangle).]{
        \includegraphics[scale=0.45, page=17]{{Figures.pdf}}
		\label{fig:lema4b}
	}
    \caption{Illustrating the proof of  Lemma~\ref{lem:AuAvx2}. 
    }
    \label{fig:ii)}
\end{figure}

\medskip

\noindent
{\it Proof of (ii).}
In this case, it suffices to show that any pair of vertices with the same $y$-coordinate is resolved by $u_2$ or $u_3$.
Observe that two vertices with the same $y$-coordinate belonging to the gray region $R$ in  Figure~\ref{fig:lema4b}  are resolved by $u_2$, regardless of which edges belong to $G$.
Thus, the pairs of vertices with the same $y$-coordinate not resolved by $u_2$ are 
the pair $(0,d),(2,d)$ and the pairs 
$(k,d+k)$, $(1+k,d+k)$, for a fixed $k\ge 1$ (which are at distance $k$ from $u_2$). All these pairs are resolved by $u_3$ (see an illustration in 
Figure~\ref{fig:lema4b}). 
\end{proof}

Now, we are ready to present one of the main results of this section.

\begin{theorem}\label{the:six}
Let $G$ be a MOP of order at least 7 with $\beta(G)=2$. Then, $4\le \beta'(G)\le 6$.
\end{theorem}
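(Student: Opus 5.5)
The lower bound $\beta'(G)\ge 4$ follows from Theorem~\ref{the:betap3}: the only MOPs with $\beta'(G)=3$ have order 3 or 6, and $\beta'(G)\ge 3$ always holds. For the upper bound, fix a metric basis $B=\{u,v\}$ with $d=d(u,v)$ and the embedding $G^*$ in $P_n\boxtimes P_n$. If $d=1$, Lemma~\ref{lem:AuAvx0} already gives $\beta'(G)=4$. So I assume $d\ge 2$. Then the internal vertices of the shortest $u$--$v$ path have degree 4 or 5 by Property~\ref{pro:p2}, so in particular $\deg(u_1),\deg(v_1)\in\{4,5\}$.

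The key idea is that a set $W$ is fault-tolerant if it contains three resolving sets, say $\{u,v\}$, $A\cup\{v\}$ and $A'\cup\{u\}$, with $A$ and $A'$ disjoint from $\{u,v\}$. Indeed, removing any single vertex $w$ from $W$ leaves at least one of these resolving sets intact. If $w\notin\{u,v\}$, then $\{u,v\}$ survives. If $w=u$, then $A\cup\{v\}$ survives, provided $u\notin A$. If $w=v$, then $A'\cup\{u\}$ survives, provided $v\notin A'$.

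I therefore take $A$ to be the two-vertex set on the $u$-side supplied by Lemma~\ref{lem:AuAvx1}(i) or Lemma~\ref{lem:AuAvx2}(i)/(ii), chosen according to whether $\deg(u_1)=4$ or $5$ and, in the latter case, whether $e_u\in G$:
\[
A\in\{\{u_1,u_4\},\{u_1,u_5\},\{u_2,u_3\}\}.
\]
Symmetrically, $A'$ comes from parts (ii), (iii), (iv) of those lemmas. Then $W=\{u,v\}\cup A\cup A'$ has at most $6$ vertices, giving $\beta'(G)\le 6$.

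The step needing care is checking that $u\notin A$ and $v\notin A'$, and that every listed vertex actually belongs to $G$.
\begin{itemize}
\item $u_1,u_2,u_4,u_5$ have $x$-coordinate at least $1$, so none of them equals $u=(0,d)$. They are also distinct from $v=(d,0)$, since their $y$-coordinates are at least $d-1\ge 1$.
\item $u_3$ is a vertex of $G$ in the case $e_u\in G$, because it is an endpoint of $e_u$.
\item The $v$-side vertices are handled symmetrically.
\end{itemize}
Coincidences such as $u_1=v_1$ when $d=2$, or $u_1=u_5$, only make $W$ smaller, so they cause no harm. I expect this bookkeeping, rather than any new resolvability argument, to be the only subtle point, since the resolvability itself has already been established in Lemmas~\ref{lem:AuAvx1} and~\ref{lem:AuAvx2}.
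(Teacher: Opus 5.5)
Your proposal is correct and matches the paper's proof. The paper likewise handles $d=1$ by Lemma~\ref{lem:AuAvx0}, takes $W=S\cup S'$ with $S\in\{S_1,S_2,S_3\}$ and $S'\in\{S'_1,S'_2,S'_3\}$ from Lemmas~\ref{lem:AuAvx1} and~\ref{lem:AuAvx2} (so $W=\{u,v\}\cup A\cup A'$ in your notation), and argues fault tolerance exactly as you do; your explicit check that $u\notin A$ and $v\notin A'$ is bookkeeping the paper leaves implicit.
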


\begin{proof}
By Theorem~\ref{the:betap3}, $\beta'(G)\ge 4$. Let us prove that $\beta'(G)\le 6$.

Let $\{u,v\}$ be a metric basis of $G$. If $d(u,v)=1$, then $\beta'(G) = 4$ by Lemma~\ref{lem:AuAvx0}. Thus, we may assume that $d(u,v)\ge 2$. We choose $S$ as $S_1$, $S_2$, or $S_3$, as defined in Lemmas~\ref{lem:AuAvx1} and~\ref{lem:AuAvx2}, depending on whether $u_1$ has degree 4, has degree 5 and $e_u$ does not belong to $G$, or has degree 5 and $e_u$ belongs to $G$, respectively. Analogously, we choose $S'$ as $S'_1$, $S'_2$, or $S'_3$, depending on whether $v_1$ has degree 4, has degree 5 and $e_v$ does not belong to $G$, or has degree 5 and $e_v$ belongs to $G$, respectively.

\begin{figure}[!htb]
	\centering
	\subfigure[The vertex $v'\in\{v_2,v_3\}$ resolves the pairs of vertices not resolved by $u_3$ (in rectangles), regardless of which zigzags belong to $G$.]{
        \includegraphics[scale=0.40, page=18]{Figures.pdf}
        \includegraphics[scale=0.40, page=19]{Figures.pdf}
		\label{fig:nodiamonda}
	}~
	\subfigure[The vertex $v'\in\{v_2,v_3\}$ resolves the pairs of vertices not resolved by $u_2$ (in rectangles).]{
        \includegraphics[scale=0.40, page=20]{Figures.pdf}
		\label{fig:nodiamondb}
   }
    \caption{Illustrating the proof of Theorem~\ref{the:four}.}
    \label{fig:nodiamond}
\end{figure}

Let $W=S\cup S'$. Note that some of the vertices in $S$ and $S'$ may coincide, but no more than two (specifically, $u_1$ and $v_1$, and $u_5$ and $v_5$). Then, $4\le |W|\le 6$. Moreover, in the worst case, when either $u$ or $v$ is removed from $W$, the remaining vertices form a resolving set of $G$, by Lemmas~\ref{lem:AuAvx1} and~\ref{lem:AuAvx2}. Therefore, $W$ is a FTRS, which implies $4\le \beta'(G)\le 6$.
\end{proof}

The bounds of Theorem~\ref{the:six} can be tightened according to $r(G)$. Recall that $r(G)$ is the number of vertices of degree 5 on the shortest path between the vertices of a given metric basis. 

\begin{theorem}\label{the:four}
Let $G$ be a MOP of order $n\ge7$ with $\beta(G)=2$, and let $B=\{u,v\}$ be a metric basis of $G$. If $r(G)=0$, then $\beta'(G)=4$.
\end{theorem}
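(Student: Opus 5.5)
The lower bound $\beta'(G)\ge 4$ is given by Theorem~\ref{the:betap3}, since $n\ge 7$. It remains to exhibit a fault-tolerant resolving set of size $4$. If $d(u,v)=1$, Lemma~\ref{lem:AuAvx0} already gives $\beta'(G)=4$, so I assume $d=d(u,v)\ge 2$.

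Since $r(G)=0$, every internal vertex of the shortest $u$--$v$ path has degree $4$ by Property~\ref{pro:p2}. In particular $\deg(u_1)=\deg(v_1)=4$, so Lemma~\ref{lem:AuAvx1} applies on both sides. It gives that $S_1=\{u_1,u_4,v\}$ and $S'_1=\{v_1,v_4,u\}$ are resolving sets. Their union $\{u,v,u_1,u_4,v_1,v_4\}$ is a FTRS, but it has up to six elements, so the plan is to discard $u_1$ and $v_1$. I will show that $W=\{u,v,u_4,v_4\}$ is a FTRS.

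When $r(G)=0$, the graphs $H_i$ glued along the spine are all triangles (degree $4$ at $(i,d-i)$). Hence the shape of $G^*$ is rigid: the spine plus one triangle at each internal path vertex, and possibly zigzags $H_0$ and $H_d$ glued at the two ends. I will check the four removals separately.

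\textbf{Removing $u_4$ or $v_4$.} Then $\{u,v\}\subseteq W\setminus\{w\}$ is a metric basis, so these two cases are immediate.

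\textbf{Removing $u$.} I must show $\{u_4,v_4,v\}$ is resolving. By Property~\ref{pro:p4}, $v$ resolves every pair except those with the same $y$-coordinate. The proof of Lemma~\ref{lem:AuAvx1} already shows that every same-$y$ pair not resolved by $u_1$ is resolved by $u_4$. So I only need the same-$y$ pairs that $u_1$ resolved, and for these I will show that $u_4$ or $v_4$ resolves them. Because the glued pieces are triangles, the vertices with a given $y$-coordinate form a short horizontal run, lying either in the zigzag $H_0$ or in the band of spine-plus-triangles. In the band I expect $v_4$ to measure horizontal position there, since its distances to the band grow monotonically in $x$. Alternatively, I may use the explicit description of $u_4$, which sits at the far end of the zigzag $H_0$ (or equals $u_2$ when $H_0$ is trivial).

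\textbf{Removing $v$.} This case follows by the symmetric argument.

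The main obstacle is the removal-of-$u$ step. It requires a careful distance computation from $u_4$ and $v_4$ to vertices in the strip, including the interaction with a possibly long zigzag $H_0$ at $u$'s end, and the degenerate cases $u_4=u_2$ and $d=2$ (where $u_1=v_1$). I would first try to prove a clean claim: for each row $y=c$, the distances from $v_4$ to the vertices of that row are strictly monotone in $x$ except at the two leftmost vertices inside $H_0$, and those are separated by $u_4$ exactly as in Lemma~\ref{lem:AuAvx1}. If this claim fails in some corner configuration, the fallback is to replace $u_4$ or $v_4$ by the degree-$3$ vertex of the corresponding zigzag.
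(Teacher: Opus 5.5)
You correctly get the lower bound from Theorem~\ref{the:betap3}, handle $d(u,v)=1$ by Lemma~\ref{lem:AuAvx0}, and note that removing $u_4$ or $v_4$ is harmless. The gap is that $W=\{u,v,u_4,v_4\}$ is not a FTRS in general, so the removal-of-$u$ step cannot be completed.

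\textbf{The structural claim is false.} $r(G)=0$ does \emph{not} force every $H_i$ to be a triangle. Degree $4$ at $(i,d-i)$ only means that, if $H_i$ contains the order-4 MOP on $(i,d-i),(i,d-i+1),(i+1,d-i+1),(i+1,d-i)$, its diagonal is $(i,d-i+1)(i+1,d-i)$. Triangles and zigzags may still hang from its two free edges, creating vertices such as $(2,d)$, $(2,d+1)$, $(3,d)$ above the spine.

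\textbf{A counterexample with $d=2$.} Name each vertex by its coordinates, with $u=(0,2)$ and $v=(2,0)$. Take the MOP of order $8$ with triangles $\{u,(1,1),(1,2)\}$, $\{(1,1),v,(2,1)\}$, $\{(1,1),(1,2),(2,1)\}$, $\{(1,2),(2,1),(2,2)\}$, $\{(1,2),(2,2),(2,3)\}$, $\{u,(1,2),(1,3)\}$. Its boundary cycle is $u,(1,1),v,(2,1),(2,2),(2,3),(1,2),(1,3)$.
\begin{itemize}
\item Every vertex $z$ equals $(d(z,u),d(z,v))$, so $\{u,v\}$ is a metric basis.
\item The only internal path vertex $(1,1)$ has degree $4$, so $r(G)=0$.
\item $H_1$ is an order-4 MOP plus a triangle, not a triangle.
\item $v_4=v_2=(2,1)$, and by the paper's definition $u_4=(2,3)$. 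This vertex lies in $H_1$, not at the end of $H_0$, so your description of $u_4$ is also inaccurate.
\item $(1,2)$ and $(2,2)$ are both adjacent to $(2,3)$ and to $(2,1)$, and both are at distance $2$ from $v$. Hence $u$ is the only vertex of $W$ resolving them.
\item If you instead use $u_4=u_2=(1,2)$, the pair $(1,3),(2,3)$ fails the same way. Both vertices are at distance $1$ from $(1,2)$, $2$ from $(2,1)$ and $3$ from $v$.
\end{itemize}
Yet $\beta'(G)=4$ here: $\{u,u_3,v,v_2\}$ is a FTRS.

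\textbf{Why your approach fails.} $v_4$ is blind near $u$. When $v_4$ is the far vertex of $H_d$, $d(v_4,t)=d(v_4,v_2)+d(v_2,t)$ for every $t$ outside $H_d$. Moreover, $v_2$ reaches the region above the spine near $u$ only through the spine. So it gives equal distances to horizontally adjacent vertices there. These are exactly the pairs $u_1$ separated in Lemma~\ref{lem:AuAvx1}, and $u_4$ cannot take over that role. Your fallback of using "the degree-3 vertex of the zigzag" does not fix this: in the example $H_0$ is a single triangle, and the bad pairs come from $H_1$.

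\textbf{The missing idea.} Use neighbours of $u$ and $v$ instead:
\begin{itemize}
\item Set $u'=u_3$ if $u_3\in G$, and $u'=u_2$ otherwise; define $v'$ symmetrically.
\item $u_3$ is adjacent to both $u$ and $u_2$ and lies above the spine. Along every row above the spine, its distance strictly increases with the $x$-coordinate.
\item So the only same-$y$ pairs $u'$ misses are the spine pairs $(k,d-k),(k+1,d-k)$. When $u'=u_2$, the pair $(0,d),(2,d)$ is also missed.
\item All of these pairs are resolved by $v'$.
\end{itemize}
This makes $\{u,u',v,v'\}$ a FTRS of size $4$.
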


\begin{proof}
By Theorem~\ref{the:six}, it is enough to show that $G$ has a FTMB with four vertices.  
If $d(u,v)=1$, then $r(G)=0$ and $\beta'(G)=4$ by Lemma~\ref{lem:AuAvx0}. Then, we may assume $d(u,v)=d\ge 2$.

Consider the vertices $u_2, u_3, v_2$ and $v_3$ of $P_n\boxtimes P_n$. 
If $u_3$ belongs to $G$, we define $u'$ as $u_3$; otherwise, we define $u'$ as $u_2$. Similarly, if $v_3$ belongs to $G$, we define $v'=v_3$; otherwise, we define $v'=v_2$. 
We prove that the set $W=\{u,u',v,v'\}$ is a FTRS of $G$. 
Obviously, the set obtained after removing $u'$ or $v'$ remains a resolving set because it contains a metric basis. 
Hence, we have to prove that $W\setminus \{u\}$ and $W\setminus \{v\}$ are resolving sets. By symmetry, it suffices to prove that $W\setminus \{u\}$ is a resolving set. 
Thus, by Property~\ref{pro:p4}, we have to show that any pair of vertices with the same $y$-coordinate is resolved by $u'$ or $v'$.

Suppose first that $u'=u_3$ (see Figure~\ref{fig:nodiamonda} for an illustration). 
It is easy to check that two vertices with the same $y$-coordinate in the region $R$ of Figure~\ref{fig:nodiamonda} are resolved by $u_3$, regardless of which vertices and edges belong to $R$, because the distance to $u_3$ strictly increases when the $x$-coordinate increases. 
Hence, it can easily be derived that the only pairs of vertices with the same $y$-coordinate not resolved by $u_3$ are the pairs of vertices of the spine with the same $y$-coordinate, that is, the pairs
$(0+k,d-k)$ and $(1+k,d-k)$, for $k\in \{0,\dots ,d-1\}$. These pairs are resolved by $v'$.

Suppose now that $u'=u_2$ (see Figure~\ref{fig:nodiamondb} for an illustration). 
In this case, the pairs of vertices with the same $y$-coordinate not resolved by $u_2$ are $(0,d), (2,d)$, if this last vertex belongs to $G$, and the pairs of vertices of the spine with the same $y$-coordinate, distinct from $d$. These pairs are resolved by $v'$.
\end{proof}

\begin{figure}[!htb]
	\centering
	\subfigure[The vertex $w$ resolves the pairs of vertices not resolved by $u'=u_3$ regardless of which zigzags belong to $G$.]{
        \includegraphics[scale=0.4, page=21]{Figures.pdf}
        \includegraphics[scale=0.4, page=22]{Figures.pdf}
		\label{fig:onediamonda}
	}~~
	\subfigure[The vertex $w$ resolves the pairs of vertices not resolved by $u'=u_2$.]{
        \includegraphics[scale=0.4, page=23]{Figures.pdf}
		\label{fig:onediamondb}
   }
    \caption{Illustrating the proof of Theorem~\ref{the:five}.
    A pair of vertices with the same $y$-coordinate in the gray region $R$ is resolved by $u'$.}
    \label{fig:onediamond}
\end{figure}

\begin{theorem}\label{the:five}
Let $G$ be a MOP of order at least $7$ with $\beta(G)=2$, and let $B=\{u,v\}$ be a metric basis of $G$. If $r(G)=1$, then $4\le \beta'(G)\le 5$.
\end{theorem}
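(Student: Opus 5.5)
The lower bound $\beta'(G)\ge 4$ comes directly from Theorem~\ref{the:betap3}, since $G$ has order at least $7$. The work is in the upper bound. I will build a FTRS of size $5$ of the form $W=\{u,u',v,v',w\}$, with $u',v'$ defined as in the proof of Theorem~\ref{the:four}: $u'=u_3$ if $u_3\in G$ and $u_2$ otherwise, and symmetrically for $v'$. The fifth vertex $w$ will be chosen near the unique degree-$5$ vertex of the spine. If $d(u,v)=1$, Lemma~\ref{lem:AuAvx0} already gives $\beta'(G)=4$, so I assume $d=d(u,v)\ge 2$.

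Removing $u'$, $v'$ or $w$ leaves the metric basis $\{u,v\}$, so only $W\setminus\{u\}$ and $W\setminus\{v\}$ need checking. By symmetry I treat $W\setminus\{u\}$. By Property~\ref{pro:p4}, it suffices to show that every pair with the same $y$-coordinate is resolved by $u'$, $v'$ or $w$.

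Let $(i,d-i)$ be the spine vertex of degree $5$, for some $1\le i\le d-1$. The graph $H_i$ glued there then contains a $4$-vertex MOP, possibly with a zigzag and/or a triangle attached. I repeat the region analysis of Theorem~\ref{the:four}. In the gray region $R$ of Figure~\ref{fig:onediamond}, the distance to $u'$ strictly increases with the $x$-coordinate, so pairs there are resolved by $u'$.

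Outside $R$, there are two kinds of unresolved pairs. The first kind are spine pairs $(k,d-k),(k+1,d-k)$, which $v'$ resolves exactly as before. The second kind are new pairs created by the extra vertices of $H_i$, which stick out beyond the spine: the vertex $(i+1,d-i+1)$ and the vertices of any zigzag glued there, lying along a diagonal ray. For these pairs, neither $u'$ nor $v'$ necessarily distinguishes the two vertices.

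I would take $w$ to be the extreme vertex of $H_i$, namely the endpoint of the zigzag glued in $H_i$, or the apex of its $4$-vertex MOP if there is no zigzag. This is analogous to how $u_4$ and $u_5$ were used in Lemmas~\ref{lem:AuAvx1} and~\ref{lem:AuAvx2}. Its distances then vary monotonically along the diagonal rays of $H_i$, so it resolves the pairs there.

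The main obstacle is verifying that this single $w$ works for both $W\setminus\{u\}$ and $W\setminus\{v\}$ simultaneously. That is, $w$ must resolve the problematic pairs with equal $y$-coordinate as well as those with equal $x$-coordinate inside $H_i$. I would check this case by case over the configurations of $\mathcal{H}$ (the optional zigzag and triangle on either side) and the two possible orientations when gluing $e,e'$ to $e_i,e'_i$. If one choice of $w$ fails in some configuration, I would fall back to choosing $w$ by the degree-based rule of Lemmas~\ref{lem:AuAvx1}–\ref{lem:AuAvx2}, applied locally at $(i,d-i)$.
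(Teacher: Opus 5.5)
Your overall frame matches the paper's: take $W=\{u,u',v,v',w\}$, and reduce to pairs with equal $y$-coordinate in $W\setminus\{u\}$. The gap is that you misidentify which pairs $u'$ and $v'$ leave unresolved, so the verification you outline is aimed at the wrong pairs.

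\textbf{Which pairs are actually left.} Your claim that the spine pairs $(k,d-k),(k+1,d-k)$ are resolved by $v'$ ``exactly as before'' fails for $0\le k\le i-1$.
\begin{itemize}
\item The vertex $v'$ lies outside $A_u((i,d-i))$. Since $\deg((i,d-i))=5$, Lemma~\ref{lem:AuAv} gives $d(v',(k,d-k))=d(v',(k+1,d-k))$ for all these $k$.
\item Concretely, $(i,d-i+1)$ and $(i+1,d-i)$ are no longer adjacent. So the path along the antidiagonal $x+y=d+1$, which $v'$ used in Theorem~\ref{the:four}, is broken.
\item $u'$ does not resolve these spine pairs either, apart from the trivial pair $u,u_2$ when $u'=u_2$.
\end{itemize}
Hence these spine pairs, with $y$-coordinate in $[d-i+1,d]$, together with $(0,d),(2,d)$ when $u'=u_2$, are exactly what the fifth vertex must resolve.

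Conversely, the pairs you target are vertices of $H_i$ with equal $y$-coordinate along its diagonal rays. These lie in the region where the distance to $u'$ strictly increases with the $x$-coordinate, so $u'$ already resolves them. Your monotonicity argument for $w$ along the rays of $H_i$ therefore says nothing about the pairs that matter.

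\textbf{The paper's choice.} The paper takes $w=(i+1,d-i+1)$, the apex of the $4$-vertex MOP in $H_i$. Passing through $(i,d-i+1)$, one gets $d(w,(k+1,d-k))=i-k$ and $d(w,(k,d-k))=i-k+1$. If $(2,d)$ is a vertex at all, then $i=1$ and $w=(2,d)$ itself, so that pair is handled too. The apex is symmetric under exchanging the roles of $u$ and $v$, so the same vertex handles $W\setminus\{v\}$. The ``simultaneity'' obstacle you anticipate, and the case analysis over $\mathcal{H}$ with a fallback rule, are therefore unnecessary.

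Your far zigzag vertex would in fact also work. One can check that every vertex of a zigzag glued to an outer edge of the diamond is at least as close to the apex as to the other endpoint of that edge. It therefore inherits the apex's distance difference on these spine pairs. But that is an argument your proposal would still have to supply, and it is not the one you give.
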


\begin{proof}
Notice first that $d(u,v)=d\ge 2$ because $r(G)=1$. Let $(i,d-i)$ be the vertex of degree 5 in the shortest path between $u$ and $v$, with $1\le i \le d-1$. Let $w$ be the vertex $(i+1, d-i+1)$ and let $y_w=d-i+1$ be its $y$-coordinate; see Figure~\ref{fig:onediamond}. Let $u'$ and $v'$ be as defined in the proof of Theorem~\ref{the:four}, that is, $u'=u_3$ ($v'=v_3$) if $G$ contains $u_3$ ($v_3$), and $u'=u_2$ ($v'=v_2$) otherwise.
We claim that the set $W=\{u,u',v,v',w\}$ is a FTRS of $G$. Since $\{u,v\}$ is a resolving set, by symmetry, it is enough to prove that $W\setminus \{u\}$ is a resolving set. Furthermore, by Property~\ref{pro:p4}, it suffices to show that two vertices with the same $y$-coordinate are resolved by a vertex in $\{u',v',w\}$.

Suppose first that $u'=u_3$; see Figure~\ref{fig:onediamonda}. The vertices with the same $y$-coordinate not resolved by $u'$ are the pairs of vertices of the spine with the same $y$-coordinate between $y_w$ and $d$,
and all of them are resolved by $w$. 
Now, if $u'=u_2$ (see Figure~\ref{fig:onediamondb}),
the pairs of vertices with the same $y$-coordinate  not resolved by $u'$ are $(0,d), (2,d)$, if this last vertex belongs to $G$, and the pairs of vertices of the spine with the same $y$-coordinate between $y_w$ and $d-1$. 
In this case, $w$ resolves all these pairs.
\end{proof}

To tighten the bounds when $r(G)\ge 4$, we need to introduce some terminology (see Figure~\ref{fig:AuAv}).
If $w=(i,d-i)$  is a vertex that belongs to the shortest path from $u$ to $v$, distinct from $u$ and $v$, we define $A_u(w)$ as the set of vertices in the component of $G- \{w,w'\}$ that contains $u$, where $w'=(i+1,d-i)$, and $A'_u(w)$ as the subset of $A_u(w)$ formed by the vertices with $x$-coordinate at most $i$. 
Similarly, we define $A_v(w)$ as the set of vertices in the  component of $G - \{w,w''\}$ that contains $v$, where $w''=(i,d-i+1)$, and $A_v'(w)$ as the subset of $A_v(w)$ formed by the vertices with $y$-coordinate at most $d-i$.

\begin{figure}[!htb]
	\centering	
	\subfigure[$A_u(w)$ when $e$ belongs to $G$.]{
        \includegraphics[scale=0.45, page=24]{Figures.pdf}
		\label{fig:AuAv_1}
	}~~~~~~~~~~~~~
	\subfigure[$A_u(w)$ when $e$ does not belong to $G$.]{
        \includegraphics[scale=0.45, page=25]{Figures.pdf}
		\label{fig:AuAv_2}
	}
    \caption{Vertices of the sets $A_u(w)$ and $A'_u(w)$. 
    In the gray region, the vertices of $A_u(w)$ are shown. Those at the left of the vertical line also belong to $A'_u(w)$. 
    }   \label{fig:AuAv}
\end{figure}

\begin{lemma}\label{lem:AuAv}
Let $G$ be a MOP of order at least 7 with $\beta(G)=2$, and let $\{u,v\}$ be a metric basis of $G$.
Let $w=(i,d-i)$ be a vertex of degree $5$ in the shortest path between $u$ and $v$. If $z\notin A_u(w)$, then $z$ does not resolve any pair of vertices in $A_u'(w)$ that have the same $y$-coordinate.
If $z\notin A_v(w)$, then $z$ does not resolve any pair of vertices in $A_v'(w)$ having the same $x$-coordinate.
\end{lemma}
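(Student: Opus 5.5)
We prove only the statement for $A_u(w)$; the one for $A_v(w)$ follows by the symmetry exchanging the roles of $u$ and $v$, and hence the $x$- and $y$-coordinates. The idea is that $\{w,w'\}$, with $w'=(i+1,d-i)$, is a separating pair of $G$ (an edge of the spine that is a diagonal). So every shortest path from a vertex $z\notin A_u(w)$ to a vertex $a\in A_u(w)$ enters $A_u(w)$ through $w$ or $w'$. This gives
\[
d(z,a)=\min\{d(z,w)+d(w,a),\ d(z,w')+d(w',a)\}.
\]
Since $ww'$ is an edge, $|d(z,w)-d(z,w')|\le 1$. Thus it suffices to show that, for two vertices $a_1,a_2\in A'_u(w)$ with the same $y$-coordinate $y$, both $d(w,a_1)=d(w,a_2)$ and $d(w',a_1)=d(w',a_2)$ hold. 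If so, the minimum above takes the same value for $a_1$ and $a_2$, and $z$ does not resolve them.

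To prove these equalities, we compute the distances from $w$ and $w'$ to vertices of $A'_u(w)$ using the embedding in $P_n\boxtimes P_n$. We would show that for every $a\in A'_u(w)$ with coordinates $(x_a,y)$ we have
\[
d(w,a)=|y-(d-i)| \quad\text{and}\quad d(w',a)=|y-(d-i)|+\varepsilon,
\]
where $\varepsilon\in\{0,1\}$ depends only on $y$. In other words, inside $A'_u(w)$ both distances depend only on the $y$-coordinate. The upper bounds come from explicit paths: from $w$ we move monotonically in $y$, using that $x_a\le i$ and that the structure of the glued graphs $H_j$ ($j<i$) together with the spine provides diagonal or vertical steps. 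The lower bound $d(w,a)\ge |y-(d-i)|$ is immediate, because $v$ is adjacent to nothing farther than one unit in $y$, so $|d(v,\cdot)|$ changes by at most 1 along an edge and $y$ is $1$-Lipschitz. We would use the $x$-coordinate in the same way, via $u$: $d(w,a)\ge i-x_a$, which is at most $|y-(d-i)|$ in the relevant region.

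The main obstacle is pinning down the exact shape of $A'_u(w)$. Because $w$ has degree $5$, the subgraph $H_i$ glued at $w$ is of type (ii). The vertex $w''=(i,d-i+1)$ and part of $H_i$ lie in $A_u(w)$, and the truncation $x\le i$ is precisely what excludes the portion of $H_i$ (the region to the right) where distances from $w'$ would depend on $x$. We need to check that in the remaining region every vertex with $y\ge d-i$ sits in the "upper-left" cone, where distances to $w$ and $w'$ are governed by $y$ alone. We also need to check that vertices with $y<d-i$ cannot occur in $A'_u(w)$ with $x\le i$, except spine vertices, whose distances are again determined by $y$. We would carry this out by case analysis on whether the edge $e$ of Figure~\ref{fig:AuAv} belongs to $G$, reading off distances from the two pictures. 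Special care is needed with the vertices $(i,d-i+1)$ and $(i-1,d-i+1)$, and with the zigzag $H_0$ near $u$.
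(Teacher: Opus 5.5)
Your reduction is correct. Because $\{w,w'\}$ separates $A_u(w)$ from the rest of $G$, you have $d(z,a)=\min\{d(z,w)+d(w,a),\,d(z,w')+d(w',a)\}$, so it suffices that $d(w,\cdot)$ and $d(w',\cdot)$ depend only on the $y$-coordinate on $A'_u(w)$. But these two identities are the whole content of the lemma, and you only promise them through a case analysis over $H_0,\dots,H_{i-1}$ and ``reading off distances from the two pictures''. That analysis is never carried out.

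In particular, you never actually use the hypothesis $\deg(w)=5$, and you leave $\varepsilon$ undetermined. The hypothesis must enter. If $\deg(w)=4$, then $w'w''$ is an edge, so $d(w',w'')=1=d(w,w'')$, whereas $d(w',(i-1,d-i+1))=2=1+d(w,(i-1,d-i+1))$. These two vertices of $A'_u(w)$ share the $y$-coordinate $d-i+1$, and $w'$ resolves them. So the claim that $d(w',\cdot)$ depends only on $y$ is exactly the step that needs an argument, and the proposal does not supply one.

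The missing idea is short and needs no case analysis.
\begin{itemize}
\item When $\deg(w)=5$, the quadrilateral $w,w',(i+1,d-i+1),w''$ of $H_i$ has the diagonal $w(i+1,d-i+1)$. Hence $w'$ and $w''$ are not adjacent and $d(w',w'')=2$.
\item The edge $ww''$ is a diagonal of $G$, and $w'$ lies on the opposite side of it from $A_u(w)\setminus V(H_i)$.
\item The vertex $w''$ is the only vertex of $H_i$ in $A'_u(w)$; the others have $x\ge i+1$.
\end{itemize}
Hence, for every $t\in A'_u(w)$,
\[
d(w',t)=\min\{1+d(w,t),\ 2+d(w''\!,t)\}=1+d(w,t),
\]
since $d(w,t)\le 1+d(w'',t)$.

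Insert this into your separation formula and use your own remark $d(z,w)\le d(z,w')+1$. This gives $d(z,t)=d(z,w)+d(w,t)$ for all $z\notin A_u(w)$, so some shortest $z$--$t$ path goes through $w$. That is exactly the observation the paper's proof rests on. Taking $z=v$ then gives $t_y=(d-i)+d(w,t)$, so $d(w,t)=t_y-(d-i)$ without constructing any monotone paths. Therefore $d(z,t)=d(z,w)+t_y-(d-i)$ depends only on $t_y$, and in particular $\varepsilon\equiv 1$.
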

\begin{proof}

Since $\deg(w)=5$, observe that given a vertex $z\notin A_u(w)$ and a vertex $t\in A_u'(w)$, there is always a shortest path from $z$ to $t$ passing through $w$. In particular, this implies $t_y=d(v,t)=d(v,w)+d(w,t) = (d-i)+d(w,t)$, where $t_y$ is the $y$-coordinate of $t$, and $d(z,t)= d(z,w)+d(w,t) = d(z,w)+t_y-(d-i)$. Hence, the distance from $z$ to the vertices in $A_u'(w)$ having the same $ y$-coordinate is the same. Therefore, $z$ does not resolve a pair of vertices in $A_u'(w)$ with the same $y$-coordinate. 

By symmetry, we derive that if $z\notin A_v(w)$, then $z$ does not resolve any pair of vertices in $A_v'(w)$ having the same $x$-coordinate.
\end{proof}

Given a metric basis $B=\{u,v\}$ of $G$, let $w_1,\dots, w_{r(G)}$ be the set vertices of degree $5$ in the shortest path from $u$ to $v$, where $w_j=(i_j,d-i_j)$, for $j\in \{1,\dots ,r(G)\}$, and $0<i_1<\dots<i_{r(G)}<d=d(u,v)$. 
Then, the following lemma holds.

\begin{lemma}\label{lem:Au23}
Let $B=\{u,v\}$ be a metric basis of a MOP $G$, and let $W$ be a FTRS of $G$. If $r(G)\ge  1$, then $W$ has at least two vertices in $A_u(w_1)$ and at least two vertices in $A_v(w_{r(G)})$.
Moreover, if $r(G)\ge  3$, then $W$ has at least three vertices in $A_u(w_3)$ and at least three vertices in $A_v(w_{r(G)-2})$.
\end{lemma}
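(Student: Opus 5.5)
The whole argument rests on Lemma~\ref{lem:AuAv}. If a set $A'_u(w)$ contains a pair of distinct vertices with the same $y$-coordinate, then every vertex outside $A_u(w)$ fails to resolve that pair. Since a FTRS must resolve every pair with at least two of its vertices, $W$ then needs at least two vertices in $A_u(w)$.

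\textbf{First statement.} I apply this with $w=w_1$.
\begin{itemize}
\item Write $w_1=(i_1,d-i_1)$. The vertex $(i_1-1,d-i_1+1)$ lies on the shortest path, so it is in $A'_u(w_1)$.
\item Because $\deg(w_1)=5$, the graph $H_{i_1}$ glued at $w_1$ is not a single triangle. So the vertex $(i_1,d-i_1+1)$ exists. This is the vertex $w''$ adjacent to $w_1$ in the spine, and it lies in the component containing $u$ after deleting $w_1$ and $w'$.
\item Both vertices have $x$-coordinate at most $i_1$ and $y$-coordinate $d-i_1+1$. Hence they form a same-$y$ pair inside $A'_u(w_1)$.
\item By Lemma~\ref{lem:AuAv}, only vertices of $A_u(w_1)$ resolve this pair, so $|W\cap A_u(w_1)|\ge 2$.
\end{itemize}
The symmetric pair gives $|W\cap A_v(w_{r(G)})|\ge 2$. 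The delicate point is checking, from the embedding description, that the degree-5 condition really places $(i_1,d-i_1+1)$ in $G$ and in $A_u(w_1)$.

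\textbf{Second statement: setup.} For $r(G)\ge 3$ I use nesting. We have $A_u(w_1)\subseteq A_u(w_2)\subseteq A_u(w_3)$, and the set $A_u(w_3)\setminus A_u(w_1)$ contains $w_1$ together with the part of the graph between $w_1$ and $w_3$. Suppose, for contradiction, that $|W\cap A_u(w_3)|\le 2$. By the first part, exactly two vertices of $W$ lie in $A_u(w_1)$, say $a$ and $b$, and no vertex of $W$ lies in $A_u(w_3)\setminus A_u(w_1)$.

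\textbf{Second statement: contradiction.} I then look at a same-$y$ pair in $A'_u(w_3)$ near $w_2$ or $w_3$. The natural choice is $(i_2-1,d-i_2+1)$ and $(i_2,d-i_2+1)$, or the analogous pair at $w_3$. The pair at $w_3$ is resolved only from inside $A_u(w_3)$, so only $a$ or $b$ can resolve it.
\begin{itemize}
\item Every vertex $t$ outside $A_u(w_1)$, in particular every vertex near $w_2$ and $w_3$, has a shortest path to $a$ or $b$ through $w_1$. This holds by the same argument as in Lemma~\ref{lem:AuAv}, with the roles of inside and outside swapped, because $\deg(w_1)=5$.
\item So the distances from $a$ and $b$ to such a $t$ equal $d(\cdot,w_1)+d(w_1,t)$, and the pair is resolved by $a$ or $b$ iff it is resolved by $w_1$.
\item The only issue is whether $w_1$ resolves a same-$y$ pair near $w_2$ or $w_3$.
\end{itemize}

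\textbf{Expected obstacle.} I expect the main work to be choosing that pair correctly, since $w_1$ may well resolve the pair at $w_2$. My first attempt is to use the degree-5 configuration at $w_2$ to produce a pair $(i_3-1,d-i_3+1)$, $(i_3,d-i_3+1)$ beyond $w_2$. From $w_1$, both of these vertices are reached through $w_2$, with equal distance increments, because $\deg(w_2)=5$ forces both to lie at distance $d(w_2,\cdot)$ governed by $y$-coordinate. Then $w_1$, hence $a$ and $b$, cannot resolve the pair, giving the contradiction. This is why three degree-5 vertices are needed, and symmetry gives the bound for $A_v(w_{r(G)-2})$.
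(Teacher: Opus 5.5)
Your first statement is correct. The pair $(i_1-1,d-i_1+1)$, $(i_1,d-i_1+1)$ works as well as the paper's pair $u,u_2$. Note that $(i_1,d-i_1+1)$ is a spine vertex, so it exists whatever $\deg(w_1)$ is; the degree-5 hypothesis enters only through Lemma~\ref{lem:AuAv}.

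The second statement has a genuine gap. Your key claim is that every vertex outside $A_u(w_1)$ has shortest paths to $a$ and $b$ through $w_1$, so that $a,b$ resolve a far pair iff $w_1$ does. By symmetry of distance, Lemma~\ref{lem:AuAv} gives this only for $a,b\in A'_u(w_1)$. It fails for vertices of $H_{i_1}$ with $x$-coordinate greater than $i_1$: for example, $(i_1+1,d-i_1+1)$ is adjacent to $w_1'$ and so reaches $w_1'$ without passing through $w_1$.

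Worse, the pair you are looking for does not exist. For every $t\notin A_u(w_1)$, every $u$--$t$ path meets $\{w_1,w_1'\}$. Since $d(w_1,t)\le 1+d(w_1',t)$, this gives $t_x=d(u,t)=i_1+d(w_1,t)$. So distances from $w_1$ into the far side are governed by the $x$-coordinate, not the $y$-coordinate. Lemma~\ref{lem:AuAv} controls distances in the opposite direction, from outside into $A'_u(w)$. Two distinct vertices with the same $y$-coordinate have different $x$-coordinates, so $w_1$ resolves every same-$y$ pair lying outside $A_u(w_1)$. For instance, your pair $(i_3-1,d-i_3+1)$, $(i_3,d-i_3+1)$ lies at distances $i_3-1-i_1$ and $i_3-i_1$ from $w_1$. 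More basically, Property~\ref{pro:p3} shows that $u\in A_u(w_1)$ resolves every same-$y$ pair. Hence a pair resolved by neither $a$ nor $b$ cannot exist when $u\in\{a,b\}$, which is precisely a case you must handle.

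The missing idea is to apply fault tolerance to $a$ and $b$ separately.
\begin{enumerate}
\item The set $W\setminus\{b\}$ is resolving. By Lemma~\ref{lem:AuAv} applied to $w_3$, $a$ is its only vertex that can resolve same-$y$ pairs of $A'_u(w_3)$. So $a$ must resolve \emph{all} of these pairs; likewise $b$.
\item The contradiction then comes from proving that $u$ is the \emph{only} vertex of $A_u(w_1)$ with this property, which forces $a=b=u$.
\item This needs a witness pair that depends on $t\in A_u(w_1)\setminus\{u\}$. The paper finds it by a case analysis on whether $t$ lies in $H_0$, in $H_i$ with $1\le i<i_1$, or in $H_{i_1}$, and on whether $(i+1,d-i-1)$ has degree $4$ or $5$.
\end{enumerate}
This is also where $w_3$ is genuinely needed. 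Some witnesses, such as $(i_1,d-i_1)$, $(i_1+2,d-i_1)$ when $i_2=i_1+1$, contain a vertex with $x$-coordinate $i_2+1$. That vertex belongs to $A'_u(w_3)$ but not to $A'_u(w_2)$.
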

\begin{proof} 
Assume first that $r(G)\ge  1$. 
By Lemma~\ref{lem:AuAv}, the vertices not belonging to $A_u(w_1)$ do not resolve the pair $u$, $u_2$.
Hence, $W$ has at least one vertex $t$ in $A_u(w_1)$. 
But $W\setminus \{t\}$ must be a resolving set. Thus, again by Lemma~\ref{lem:AuAv},
$W\setminus \{t\}$ has at least one vertex in $A_u(w_1)$. Therefore, $W$ has at least two vertices in $A_u(w_1)$.
By symmetry, $W$ has at least two vertices in $A_v(w_{r(G)})$. 

Assume now $r(G)\ge  3$.
Suppose that $W$ has at most two vertices $\{t_1,t_2\}$ in $A_u(w_3)$. We have proven that $W$ has at least two vertices in $A_u(w_1)$. Hence, since $A_u(w_1)\subseteq A_u(w_3)$, we derive $W\cap A_u(w_1)=W\cap A_u(w_3)=\{t_1,t_2\}$. 
As $W$ is a FTRS, $W\setminus \{t_1\}$ and $W\setminus \{t_2\}$ are resolving sets. Thus, by Lemma~\ref{lem:AuAv}, both vertices $t_1$ and $t_2$ must resolve every pair of vertices in $A'_u(w_3)$ with the same $y$-coordinate. 
However, as we next prove, $u$ is the only vertex in $A_u(w_1)$ that resolves all these pairs.
More precisely, for each $t\in A_u(w_1)\setminus \{u\}$, 
we will show that there is a pair of vertices in $A_u'(w_3)$ with the same $y$-coordinate not resolved by $t$.
We distinguish three cases depending on the subgraph $H_i$ to which $t$ belongs. Recall that $H_i$ is the subgraph attached to $(i,d-i)$ in the embedding of $G$; see Figure~\ref{fig:embedded}.

\begin{figure}[!htb]
\centering
	\subfigure[]{
        \includegraphics[scale=0.9, page=26]{Figures.pdf}
		\label{fig:casHi_1}
	}~~~
    	\subfigure[]{
        \includegraphics[scale=0.9, page=27]{Figures.pdf}
		\label{fig:casHi_2}
	}~~~
    	\subfigure[]{
        \includegraphics[scale=0.9, page=28]{Figures.pdf}
		\label{fig:casHi_3}
	}~~~
    	\subfigure[]{
        \includegraphics[scale=0.9, page=29]{Figures.pdf}
		\label{fig:casHi_4}
	}
    \caption{
    Vertices in the gray region do not resolve the pair of circled vertices.}
    \label{fig:casHi}
\end{figure}
\bigskip

\noindent
{\bf Case 1}:  $t\in V(H_i)$, $1\le i<i_1$ (see Figure~\ref{fig:casHi}).
\medskip

    If $t=(i,d-i)$, then $t$ does not resolve the pair $(i-1,d-i+1)$ and $(i,d-i+1)$; see Figure~\ref{fig:casHi_1}. Thus, we can suppose $t\not =(i,d-i)$ in the next reasoning.
    
    If $\deg ((i+1,d-i-1))=4$, then $t$ does not resolve the pair $(i+1,d-i-1)$ and $(i+2,d-i-1)$; see Figure~\ref{fig:casHi_2}.
    
    If $\deg ((i+1,d-i-1))=5$ and $t=(i+k,d-i+1+k)$, for some $k\ge 0$, then $t$ does not resolve  the pair $(i,d-i)$ and $(i+1,d-i)$; see Figure~\ref{fig:casHi_3}. 
    
    Finally, if $\deg ((i+1,d-i-1))=5$ and $t=(i+1+k,d-i+k)$ or $t=(i+1+k,d-i+1+k)$, for some $k\ge 0$, then $t$ does not resolve  the pair $(i,d-i)$ and $(i+2,d-i)$; see Figure~\ref{fig:casHi_4}.

\begin{figure}[!htb]
\centering
	\subfigure[]{
        \includegraphics[scale=0.8, page=30]{Figures.pdf}
		\label{fig:casHi1_1}
	}
    	\subfigure[]{
        \includegraphics[scale=0.8, page=31]{Figures.pdf}
		\label{fig:casHi1_2}
	}
    	\subfigure[]{
        \includegraphics[scale=0.8, page=32]{Figures.pdf}
		\label{fig:casHi1_3}
	}
        \subfigure[]{
        \includegraphics[scale=0.8, page=33]{Figures.pdf}
		\label{fig:casHi1_4}
	}
    	\subfigure[]{
        \includegraphics[scale=0.8, page=34]{Figures.pdf}
		\label{fig:casHi1_5}
	}
    \caption{
    Vertices in the gray region do not resolve the pair of circled vertices.}
    \label{fig:casHi1}
\end{figure}
\medskip

\noindent
{\bf Case 2}:  $t\in V(H_{i_1})$  (see Figure~\ref{fig:casHi1}).
\medskip

If $t=(i_1,d-i_1+1)$, then $t$ does not resolve the pair $(i_1-1,d-i_1+1)$ and $(i_1+1,d-i_1+1)$ see Figure~\ref{fig:casHi1_1}. 

If $t=(i_1+k,d-i_1+1+k)$, for some $k\ge 1$, then $t$ does not resolve the pair $(i_1,d-i_1+1)$ and $(i_1+1,d-i_1+1)$; see Figure~\ref{fig:casHi1_2}. 

If $t=(i_1+1+k,d-i_1+1+k)$, for some $k\ge 0$, then $t$ does not resolve the pair $(i_1,d-i_1)$ and $(i_1+1,d-i_1)$; see Figure~\ref{fig:casHi1_3}. 

If $t=(i_1+1+k,d-i_1+k)$, for some $k\ge 1$, 
and $\deg ((i_1+1,d-i_1-1))=4$, then $t$ does not resolve the pair
$(i_1+1,d-i_1-1)$ and $(i_1+2,d-i_1-1)$; see Figure~\ref{fig:casHi1_4}. 

Finally, if $t=(i_1+1+k,d-i_1+k)$, for some $k\ge 1$, and $\deg ((i_1+1,d-i_1-1))=5$, then $t$ does not resolve the pair $(i_1,d-i_1)$ and $(i_1+2,d-i_1)$; see Figure~\ref{fig:casHi1_5}.

\begin{figure}[!htb]
\centering
	\subfigure[]{
        \includegraphics[scale=0.9, page=35]{Figures.pdf}
		\label{fig:casH0_1}
	}~~~~~~
    	\subfigure[]{
        \includegraphics[scale=0.9, page=36]{Figures.pdf}
		\label{fig:casH0_2}
	}~~~~~~
    	\subfigure[]{
        \includegraphics[scale=0.9, page=37]{Figures.pdf}
		\label{fig:casH0_3}
	}
    \caption{
    Vertices in the gray region do not resolve the pair of circled vertices.}
    \label{fig:casH0}
\end{figure}
\medskip

\noindent
{\bf Case 3}:  $t\in V(H_0)$ (see Figure~\ref{fig:casH0}).
\medskip

If $t=(0+k,d+k)$, for some $k\ge 1$, then $t$ does not resolve the pair $u=(0,d)$ and $u_2=(1,d)$; see Figure~\ref{fig:casH0_1}.

If $t=(1+k,d+k)$, for some $k\ge 0$, and $\deg ((1,d-1))=4$, then $t$ does not resolve the pair $(1,d-1)$ and $(2,d-1)$; see Figure~\ref{fig:casH0_2}.

Finally, if $t=(1+k,d+k)$, for some $k\ge 0$, and $\deg ((1,d-1))=5$, then $t$ does not resolve the pair $u=(0,d)$ and $(2,d)$; see Figure~\ref{fig:casH0_3}.
\medskip

Therefore, $u$ is the only vertex in $A_u(w_1)$ that resolves all pairs of vertices in $A_u'(w_3)$, as claimed. As a consequence, $W$ must have at least three vertices in $A_u(w_3)$. Moreover, by symmetry, $W$ must also contain at least three vertices in $A_v(w_{r(G)-2})$.
\end{proof}

\begin{theorem}
\label{thm:r4r6}
Let $G$ be a MOP of order at least $7$ with $\beta(G)=2$, and let $B=\{u,v\}$ be a metric basis of $G$. If $r(G)\ge 4$, then $\beta'(G)\ge 5$, and if $r(G)\ge 6$,  then $\beta'(G)= 6$. 
\end{theorem}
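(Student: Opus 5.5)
The plan is to derive both lower bounds from Lemma~\ref{lem:Au23} by counting vertices of a FTRS $W$ in disjoint regions, and to take the upper bound $\beta'(G)\le 6$ directly from Theorem~\ref{the:six}.

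The key observation is that the sets $A_u(w_j)$ grow with $j$ and the sets $A_v(w_j)$ shrink with $j$. Moreover, for $j<k$ the sets $A_u(w_j)$ and $A_v(w_k)$ are disjoint. To see this, note that $A_u(w_j)$ consists of vertices attached to the spine at positions with $x$-coordinate at most about $i_j$, that is, $H_0,\dots,H_{i_j}$ together with spine vertices up to $w_j$. Likewise, $A_v(w_k)$ consists of $H_{i_k},\dots,H_d$ and the spine vertices from $w_k$ on. The degree-$5$ condition at $w_j$ and $w_k$ means the separator $\{w_j,w_j'\}$ cuts off exactly the left part, and $\{w_k,w_k''\}$ the right part. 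Because $i_j<i_k$ and distinct $H_i$ share at most one spine vertex, these pieces do not overlap. Checking this disjointness carefully from the definitions of $A_u$, $A_v$ and the structure of the $H_i$ is the step I expect to need the most care, in particular at the boundary vertices $w_j'$ and $w_k''$.

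\emph{Case $r(G)\ge 4$.} I would show $|W|\ge 5$. Since $r(G)\ge 3$, Lemma~\ref{lem:Au23} gives $|W\cap A_u(w_3)|\ge 3$. Since $r(G)\ge 1$, the same lemma gives $|W\cap A_v(w_{r(G)})|\ge 2$. Because $3<4\le r(G)$, the sets $A_u(w_3)$ and $A_v(w_{r(G)})$ are disjoint, so $|W|\ge 5$ and hence $\beta'(G)\ge 5$.

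\emph{Case $r(G)\ge 6$.} Lemma~\ref{lem:Au23} gives three vertices of $W$ in $A_u(w_3)$ and three in $A_v(w_{r(G)-2})$. Since $r(G)-2\ge 4>3$, these sets are disjoint, so $|W|\ge 6$. Combined with Theorem~\ref{the:six}, this yields $\beta'(G)=6$.
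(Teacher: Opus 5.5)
Your proposal is correct and is essentially the paper's proof: both lower bounds come from Lemma~\ref{lem:Au23} applied to the disjoint pairs $A_u(w_3), A_v(w_{r(G)})$ (giving $3+2$) and $A_u(w_3), A_v(w_{r(G)-2})$ (giving $3+3$), and the upper bound comes from Theorem~\ref{the:six}. The paper only asserts the disjointness of these sets; your argument is the right one, since for $j<k$ the sets lie on opposite sides of the separating diagonals $w_jw_j'$ and $w_kw_k''$, and the single vertex that $H_{i_j}$ and $H_{i_k}$ could share is $w_j'=w_k''$, which belongs to neither set.
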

\begin{proof}
Let $W$ be a FTRS of $G$ of minimum size. If $r(G)\ge  4$, then $A_u(w_3)$ and $A_v(w_{r(G)})$ are disjoint sets. By Lemma~\ref{lem:Au23}, $W$ has at least 3 vertices in $A_u(w_3)$ and at least 2 vertices in $A_v(w_{r(G)})$. Hence, $\beta'(G)=|W|\ge 5$.

If $r(G)\ge  6$, then $A_u(w_3)$ and $A_v(w_{r(G)-2})$ are disjoint sets. By Lemma~\ref{lem:Au23}, $W$ has at least 3 vertices in $A_u(w_3)$ and at least 3 vertices in $A_v(w_{r(G)-2})$. Thus, $\beta'(G)=|W|\ge 6$. By Theorem~\ref{the:six}, $\beta'(G)=6$.
\end{proof}

\begin{figure}[!htb]
\centering
\includegraphics[scale=0.8, page=38]{Figures.pdf}
    \caption{Examples of MOPs $G$ with $\beta(G)=2$ and $\beta'(G) \in\{4,5,6\}$, where $r$ denotes $r(G)$.
}
    \label{fig:tightbounds}
\end{figure}

We conclude this section by summarizing the results of Theorems~\ref{the:six}, \ref{the:four}, \ref{the:five} and \ref{thm:r4r6}, and showing that all the values between the given lower and upper bounds are realizable.

\begin{theorem} If $G$ is a MOP of order at least 7 with $\beta(G)=2$, then $4\le \beta'(G)\le 6$.
Moreover, for a metric basis $B$ of $G$, 
\begin{enumerate}[label=(\roman*), leftmargin=*] 
\item if $r(G)=0$, then $\beta'(G)=4$;
\item if $r(G)=1$, then $4\le \beta'(G)\le 5$;
\item if $r(G)\in \{ 2,3\}$, then $4\le \beta'(G)\le 6$;
\item if $r(G)\in \{4,5\}$, then $5\le \beta'(G)\le 6$;  
\item if $r(G)\ge 6$, then $\beta'(G)= 6$.
\end{enumerate}
Moreover, all values between the given lower and upper bounds are realizable. 
\end{theorem}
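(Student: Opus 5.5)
The summary statement splits into two parts. The bounds collect earlier results, and realizability has to be shown by examples.

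For the bounds, the general inequality $4\le\beta'(G)\le 6$ is exactly Theorem~\ref{the:six}. Item (i) is Theorem~\ref{the:four}. Item (ii) is Theorem~\ref{the:five}. Item (iii) is Theorem~\ref{the:six} restated, since nothing sharper is claimed for $r(G)\in\{2,3\}$. For item (iv), the lower bound $5$ comes from Theorem~\ref{thm:r4r6} and the upper bound $6$ from Theorem~\ref{the:six}. Item (v) is the second half of Theorem~\ref{thm:r4r6}.

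For realizability, I would exhibit explicit MOPs, as in Figure~\ref{fig:tightbounds}, covering the following pairs $(r(G),\beta'(G))$:
\begin{itemize}
\item value $4$ for $r\in\{0,1,2,3\}$;
\item value $5$ for $r\in\{1,2,3,4,5\}$;
\item value $6$ for $r\in\{2,3,4,5\}$, and for $r\ge 6$, which is automatic.
\end{itemize}

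The upper bounds come cheaply from constructions. For value $5$ with $r\in\{2,3,4,5\}$, I would use a set modelled on the proof of Theorem~\ref{the:five}, namely $\{u,u',v,v'\}$ plus one extra vertex. For value $4$ with $r\le 3$, I would look for MOPs with a second metric basis whose union with $\{u,v\}$ is fault tolerant, in the spirit of Lemma~\ref{lem:AuAvx0}.

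The lower bounds are the hard part. For value $5$ and value $6$ with $r\in\{1,2,3\}$, one must show that no $4$-set (respectively no $5$-set) is an FTRS. I would use Lemma~\ref{lem:AuAv} and the counting argument of Lemma~\ref{lem:Au23} to force at least two vertices of $W$ in $A_u(w_1)$ and two in $A_v(w_{r})$. For $\beta'=5$, I would then choose the glued subgraphs $H_i$ so that some pair near the middle of the spine (same $y$-coordinate, inside a region like $A'_u(w_r)$) is resolved only by vertices outside these forced sets. This pushes $|W|\ge 5$. For value $6$ with small $r$, I would argue similarly that the middle region forces two more vertices. I expect these lower-bound verifications for the specific small examples to be the main obstacle. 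Failing a clean argument, each example has bounded order, so I would settle them by exhaustive case checking, possibly computer-assisted.
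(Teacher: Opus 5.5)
Your proposal is correct and follows the paper's proof. The bounds in (i)--(v) come from exactly the assembly of Theorems~\ref{the:six}, \ref{the:four}, \ref{the:five} and~\ref{thm:r4r6} that you describe, and realizability is settled by the explicit MOPs of Figure~\ref{fig:tightbounds}, whose FTMD the authors computed by computer, which matches your fallback. One small bookkeeping slip: value $6$ never occurs for $r=1$ by Theorem~\ref{the:five}, whereas the examples with $r\in\{4,5\}$ and $\beta'=6$ also need a check that no $5$-set is an FTRS, because Theorem~\ref{thm:r4r6} only gives $\beta'\ge 5$ there.
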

\begin{proof} 
It is enough to show an example for each combination of possible values of $r(G)$ and $\beta'(G)$ in the statement. 
An example for each case is depicted in  Figure~\ref{fig:tightbounds}, where $r$ denotes $r(G)$ and red circled vertices form a FTMB.
The FTMD was calculated by computer.
\end{proof}

\section{An upper bound on the FTMD of MOPs}\label{sec:upper}

The main goal of this section is to show that any MOP $G$ of order  $n\ge5 $ has a FTRS $W$ of size $\cn$, which can be built in linear time. Furthermore, this set $W$ is {\em dense}, meaning that for every vertex not in $W$, its two neighbors on the boundary of $G$ belong to $W$. To prove this result, we will need the following lemma by 
Shermer~\cite{Sher91}, which shows that a MOP can be split into two MOPs of specific sizes.

\begin{lemma}\cite{Sher91}\label{lem:orourke}   
Let $l\ge 2$ be a positive integer and let $G$ be a MOP of order $n$, with $n\ge 2l$. Then, there is a diagonal that splits $G$ into two MOPs, one of them of order $m$, with $m\in \{l+1, \ldots, 2l-1 \}$. 
\end{lemma}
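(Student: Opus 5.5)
The statement is a cited result of Shermer~\cite{Sher91}, but it has a short self-contained proof by an extremal choice of diagonal, which I would follow.

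First, the basic fact I will use repeatedly: every diagonal $e=ab$ of $G$ splits $G$ into two MOPs $G_1$ and $G_2$ that share only $a$ and $b$. So if they have orders $m_1$ and $m_2$, then $m_1+m_2=n+2$. These are again triangulated polygons, hence MOPs.

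Since $n\ge 2l\ge 4$, $G$ has at least one diagonal. For any diagonal, the larger of its two sides has order at least $(n+2)/2\ge l+1$. Hence the family $\mathcal{F}$ of pairs $(e,P)$ is nonempty, where $e$ is a diagonal and $P$ is one of the two sides of $e$ with $|V(P)|\ge l+1$. Among all such pairs I choose one with $m=|V(P)|$ minimum. By construction $m\ge l+1$, so it suffices to prove $m\le 2l-1$.

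Suppose, for a contradiction, that $m\ge 2l$. Let $e=ab$, and let $abc$ be the unique triangle of $P$ containing $e$; it exists because $e$ is a boundary edge of $P$ and $m\ge 3$. The edges $ac$ and $bc$ cut $P$ into the triangle $abc$ and two MOPs:
\begin{itemize}
\item $P_a$, lying beyond $ac$, with order $p\ge 2$;
\item $P_b$, lying beyond $bc$, with order $q\ge 2$.
\end{itemize}
Here order $2$ means the edge is a boundary edge of $P$. Counting vertices gives $p+q=m+1$, so $\max(p,q)\ge (m+1)/2\ge l+\tfrac12$. Thus, say, $p\ge l+1\ge 3$.

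Since $P_a$ has at least three vertices, $ac$ is not a boundary edge of $P$. A boundary edge of $P$ other than $e$ is a boundary edge of $G$, so $ac$ is a diagonal of $G$, and $P_a$ is exactly one of its two sides in $G$. Therefore $(ac,P_a)\in\mathcal{F}$. Moreover $p=m+1-q\le m-1<m$, which contradicts the minimality of $m$. Hence $l+1\le m\le 2l-1$, and the diagonal $e$ splits $G$ into $P$, of order $m$, and a second MOP of order $n+2-m$, as required.

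The only delicate point is the bookkeeping in the third paragraph. Specifically, one must check that the subgraph cut off by $ac$ coincides with a side of $ac$ in $G$, and that $ac$ is genuinely a diagonal of $G$ whenever $p\ge 3$. Both follow because $P$ is bounded by $e$ together with a contiguous arc of the boundary of $G$. Computationally, the argument gives a linear-time procedure: starting from any diagonal, keep descending through the dual tree into the larger child piece until its order first drops to at most $2l-1$.
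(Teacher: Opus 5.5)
The paper gives no proof of this lemma. It is quoted from Shermer~\cite{Sher91} and used as a black box, with $l=4$, to cut off a MOP of order $5$, $6$ or $7$ in the induction for the $\lceil 2n/3\rceil$ bound. So there is nothing in the paper to compare against. Your self-contained argument is correct, and it is the usual extremal proof of this splitting lemma. You take a side of order at least $l+1$ of minimum order $m$. If $m\ge 2l$, the triangle on the cutting diagonal splits it into pieces of orders $p,q\ge 2$ with $p+q=m+1$. The larger piece then has order in $\{l+1,\dots,m-1\}$, contradicting minimality.

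Two small remarks. First, the sentence deducing that $ac$ is a diagonal of $G$ argues in the wrong direction. From ``$ac$ is not a boundary edge of $P$'' you need that an edge of $P$ lying on the boundary of $G$ is a boundary edge of $P$; you state the converse instead. The cleanest fix: $ac$ is a non-boundary edge of the MOP $P$, so it lies in two triangles of $P$, hence in two bounded faces of $G$, and is therefore a diagonal of $G$. Your closing comment about $P$ being bounded by $e$ and a contiguous boundary arc also yields this.

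Second, your descent in the dual tree is linear for one application of the lemma. The paper, however, applies the lemma repeatedly to shrinking MOPs, and restarting the descent from scratch each time would be quadratic overall. That is why the paper instead collects all the needed diagonals during a single postorder traversal of the dual tree.
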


\begin{theorem}
If $G$ is a MOP of order $n\ge 5$, then $\beta'(G)\le \cn$. Moreover, a FTRS of cardinality $\cn$ can be built in linear time. 
\end{theorem}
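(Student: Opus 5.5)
We will prove the statement by strong induction on $n$. We will in fact prove the stronger claim that $G$ has a \emph{dense} FTRS $W$ of size at most $\cn$; since adding vertices to a FTRS keeps it a FTRS, we can always pad $W$ up to exactly $\cn$ vertices. Recall that $W$ is dense if every vertex not in $W$ has both of its boundary neighbours in $W$.

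\textbf{Base cases.} For $5\le n\le 7$ we check the claim directly. For orders $5$ and $6$, Figure~\ref{fig:littleMOPs} already gives FTRSs of size $4\le \cn$, and we verify that we can choose them dense. For $n=7$ we need a dense FTRS of size $5$; we will check the finitely many MOPs of order $7$ directly, or obtain one by adding a vertex to a dense FTMB of size $4$.

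\textbf{Inductive step.} For $n\ge 8$ we apply Lemma~\ref{lem:orourke} with $l=3$. This gives a diagonal $ab$ that splits $G$ into MOPs $G_1$ and $G_2$ sharing the edge $ab$, where $G_1$ has order $m\in\{4,5\}$. The case $m=5$ is the convenient one: $G_1$ contributes three vertices other than $a,b$, and $G_2$ has order $n-3$. By induction $G_2$ has a dense FTRS $W_2$ with $|W_2|\le\cnp=\cn-2$. We then add two of the three private vertices of $G_1$, chosen so that density is preserved along the boundary. The natural choice is the two private vertices adjacent to $a$ and $b$ on the boundary, which leaves the middle vertex with both boundary neighbours in $W$. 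If $m=4$, we reapply the lemma, or we split off three vertices using a different diagonal choice so that we always remove exactly three boundary vertices forming a path $x_1x_2x_3$ between $a$ and $b$. We will have to handle the few configurations separately to make sure that a triangle fan of exactly three new vertices can always be found.

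\textbf{Main obstacle.} The key difficulty is proving that $W=W_2\cup\{x_1,x_3\}$ is a FTRS of $G$. For this we need a structural lemma saying that density is what makes local resolution work. The lemma is: in a MOP, if $W$ is dense, then any two vertices are resolved by at least two vertices of $W$. The idea is that a pair $z_1,z_2$ can be resolved by nearby boundary vertices, because a vertex $w$ adjacent to $z_1$ and far from $z_2$ separates them. To prove this, we would first handle adjacent pairs and pairs with a common neighbour using the triangulated polygon structure and the fact that geodesics between two vertices in a MOP stay within the region between them. Then we would argue that for $z_1\neq z_2$, at least one of $z_1,z_2$ lies in $W$ or has two boundary neighbours in $W$, and that these neighbours can be used to resolve the pair. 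I expect this density-resolving lemma to be the hardest part. If it holds in general, the induction is only needed to control the count $\cn$, because any dense set is then automatically a FTRS.

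\textbf{Counting and linear time.} This suggests a simpler overall route, which we will pursue first. We show that every MOP with $n\ge5$ has a dense set of size $\cn$. For example, we can take all boundary vertices except every third vertex along the Hamiltonian boundary cycle, which has size exactly $n-\lfloor n/3\rfloor=\cn$. Density is immediate, since the removed vertices are pairwise non-consecutive on the cycle. Proving the density-resolving lemma then finishes the argument. Building $W$ takes linear time once we have the boundary cycle, which is computable in linear time for MOPs. The splitting of Lemma~\ref{lem:orourke} serves as a fallback, to be used only if the density lemma needs extra local hypotheses that a careful choice of the removed vertices (for example, avoiding degree-2 vertices) can guarantee.
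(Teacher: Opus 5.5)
The proposal has a genuine gap: everything rests on the claim that every dense set of a MOP is a FTRS, and that claim is false, already for the set you suggest. Take the fan $F_6$, with centre $h$ adjacent to $p_1,\dots,p_5$ and boundary cycle $h\,p_1\,p_2\,p_3\,p_4\,p_5\,h$, so that $d(p_i,p_j)=2$ whenever $|i-j|\ge 2$. Delete every third boundary vertex, namely $p_2$ and $p_5$. The resulting set $W=\{h,p_1,p_3,p_4\}$ is dense and has $\lceil 2n/3\rceil=4$ elements. Now look at the pair $p_3,p_5$. The vertices $h$ and $p_4$ are adjacent to both, and $p_1$ is at distance $2$ from both. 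So $p_3$ is the only vertex of $W$ resolving them, and $W\setminus\{p_3\}$ is not resolving.

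The same happens in every fan $F_n$ with $n\ge 6$: deleting $p_{n-1},p_{n-4},p_{n-7},\dots$ leaves the pair $p_{n-3},p_{n-1}$ resolved only by $p_{n-3}$. Your heuristic breaks because density only offers the boundary neighbours of a vertex outside $W$ as resolvers. In a MOP these can be common neighbours of both vertices of the pair, as $p_4$ and $h$ are here. So the ``simpler route'' fails, and with it the trivial linear-time claim.

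Your inductive fallback does not avoid this. The verification that $W_2\cup\{x_1,x_3\}$ is a FTRS of $G$ is handed to the same false lemma. It has to be done pair by pair, using only that $W_2$ is a dense FTRS of $G_2$ and that distances in $G_2$ are preserved in $G$.

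Moreover, a diagonal cutting off exactly three new vertices need not exist. Glue a MOP of order $4$ onto each edge of a triangle, giving $n=9$. Every diagonal then cuts off a MOP of order $3$, $4$, $7$ or $8$, never $5$.

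The paper handles this as follows:
\begin{itemize}
\item It takes $l=4$ in Shermer's lemma, so the piece has order $5$, $6$ or $7$, giving eight configurations.
\item In each configuration it deletes three suitable vertices so that $G'$ is still a MOP of order $n-3$. For orders $6$ and $7$ these are not all the private vertices of the piece.
\item It chooses the two added vertices according to which nearby vertices already lie in $W'$, and checks all new pairs explicitly in tables.
\item Density is used only as an inductive invariant (for example, it guarantees that one of two consecutive boundary vertices lies in $W'$), not as a sufficient condition for fault tolerance.
\item The linear-time bound comes from locating the successive splitting diagonals by a postorder traversal of the dual tree.
\end{itemize}
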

\begin{proof}
The proof proceeds by induction on $n$, with the base cases being $n \in \{5, 6, 7\}$. 
Figure~\ref{fig:dense7} shows a dense FTRS $W$ of size $\cn$, for every MOP $G$ of order $n \in \{5, 6, 7\}$. 

\begin{figure}[!htb]
\centering
\includegraphics[scale=0.75, page=39]{Figures.pdf}
        \caption{All MOPs of order 5, 6, and 7. Black vertices in each MOP form a  dense FTRS of size $\cn$.}
    \label{fig:dense7}
\end{figure}

Now, assume $G$ is a MOP of order $n\ge 8$, and the theorem is true for MOPs of order at most $n-1$. Taking $l=4$  in Lemma~\ref{lem:orourke}, there is a diagonal that splits $G$ into two MOPs, one of them, say $G_0$,  
of order $n_0\in \{ 5,6,7\}$.
Thus, $G$ is one of the MOPs of Figure~\ref{fig:cota_todos},
where one of the cases 1(i), 1(ii) or 1(iii) holds if $n_0=5$; one of the cases 2(i) or 2(ii) holds if $n_0=6$ and the preceding case does not hold; and one of the cases 3(i), 3(ii) or 3(iii) holds if $n_0=7$ and the preceding cases do not hold. In each of the eight configurations, we show how to build a dense FTRS $W$ of size $|W|=\cn$.

\begin{figure}[!htb]
\centering
\includegraphics[scale=0.75, page=40]{Figures.pdf}
    \caption{If $G$ is a MOP of order at least 8, one of these cases (or its symmetric) holds.}
    \label{fig:cota_todos}
\end{figure}
\bigskip

\noindent
{\bf Cases $1(i)$, $1(ii)$, $1(iii)$.} 
\medskip

Suppose that the vertices of $G$ are labeled as shown in Figure~\ref{fig:casodiag5}. Let $W'$ be a dense FTRS of $G'=G- \{a,b,c\}$ of size $|W'| = \cnp \ge 4$, which exists by the inductive hypothesis because $G'$ is a MOP of order $n-3\ge 5$. Let $W=W'\cup \{a,c\}$. In each of Cases $1(i), 1(ii)$ and $1(iii)$, we prove that $W$ is a dense FTRS of $G$ of size $\cn$. 

\begin{figure}[!htb]
\centering
	\subfigure[Case $1(i)$.]{
        \includegraphics[scale=0.8, page=41]{Figures.pdf}
		\label{fig:casodiag5_1}
	}~~~~~~
    	\subfigure[Case $1(ii)$.]{
        \includegraphics[scale=0.8, page=42]{Figures.pdf}
		\label{fig:casodiag5_2}
	}~~~~~~
    	\subfigure[Case $1(iii)$.]{
        \includegraphics[scale=0.8, page=43]{Figures.pdf}
		\label{fig:casodiag5_3}
	}
    \caption{
    Cases $1(i)$, $1(ii)$ and $1(iii)$. 
    In gray, the MOP $G'=G- \{a,b,c\}$. Black vertices belong to $W$.}
    \label{fig:casodiag5}
\end{figure}

We first show that $W$ 
is a FTRS of $G$.
Since the distance between two vertices of $G'$ is the same in $G$ and in $G'$, it only remains to check that every pair of vertices of $G$, with at least one of them in $\{a,b,c\}$, is resolved by two vertices of $W$. 
Denote by $u$ a vertex not in $\{a,b,c,d,e\}$, and let $u'$ and $u''$ be its neighbors on the boundary of $G'$.
As $W'$ is a dense FTRS of $G'$, 
at least one of the vertices  $d$ or $d'$ belongs to $W'$, at least one of the vertices  $e$ or $e'$ belongs to $W'$ and, for $u\notin \{a,b,c,d,e\}$, either $u\in  W'$, or $u',u''\in W'$.

Table~\ref{tab:1i} analyzes Case $1(i)$, giving for every pair $x,y$ of vertices with $x\in \{a,b,c\}$, one or more pairs of vertices so that at least one of these pairs is formed by two vertices of $W$ that resolve the pair $x,y$.  For example, the pair $a,e$ is resolved by $a$ and $e$ if $e\in W$, or by $a$ and $e'$ if $e'\in W$. Recall that at least one of $e$ and $e'$ belongs to $W'$. Note that the pair $b, d$ is resolved by two vertices of $W'$ because $d(w',b)=d(w',d)+1$ for any vertex $w'\in W'$. As a consequence, $W$ is a FTRS of $G$ in Case $1(i)$. Using Tables~\ref{tab:1ii} and~\ref{tab:1iii}, it can also be checked that $W$ is a FTRS of $G$ in Cases $1(ii)$ and $1(iii)$, respectively.

\begin{table}[!htb]
  \centering 
  \begin{tabular}{r|c|c|c|c|c}
            $x\diagdown$\raisebox{1mm}{$y$}& $b$ & $c$ & $d$ & $e$ & $u$
            \\
            \hline
            $a$ & $a,c$ & $a,c$ & 
            $a,c$&
            $a,e$ or $a,e'$
            & $a,u$ or $a,u'$ or $a,u''$
            \\
            \hline
            $b$& -- & $a,c$ & 2 vertices of $W'$& 
            $c,e$ or $c,e'$
            & $a,c$
            \\
            \hline
            $c$& --&--& $a,c$& $a,c$&
            $c,u$ or $c,u'$ or $c,u''$
            \\
            \hline
        \end{tabular}
  \caption{Case $1(i)$.}
  \label{tab:1i}
\end{table}

\begin{table}[!htb]
  \centering 
  \begin{tabular}{r|c|c|c|c|c}
            $x\diagdown$\raisebox{1mm}{$y$}& $b$ & $c$ & $d$ & $e$ & $u$
            \\
            \hline
            $a$ & $a,e$ or $a,e'$ & $a,c$ & 
            $a,d$ or $a,d'$&
            $a,c$
            &$a,c$ 
            \\
            \hline
            $b$& -- & $c,d$ or $c,d'$ & 2 vertices of $W'$& $c,e$ or $c,e'$
            & $a,c$
            \\
            \hline
            $c$& --&--& $c,d$ or $c,d'$&
            $c,e$ or $c,e'$ & $a,c$
            \\
            \hline
        \end{tabular}
  \caption{Case $1(ii)$.}
  \label{tab:1ii}
\end{table}

\begin{table}[!htb]
  \centering 
  \begin{tabular}{r|c|c|c|c|c}
            $x\diagdown$\raisebox{1mm}{$y$}& $b$ & $c$ & $d$ & $e$ & $u$
            \\
            \hline
            $a$ & $a,c$  & $a,c$ & 
            $a,c$&
            $a,e$ or $a,e'$
            &$a,u$ or $a,u'$ or $a,u''$
            \\
            \hline
            $b$& -- & $a,c$  & $a,d$ or $a,d'$ & $c,e$ or $c,e'$
            & $a,c$
            \\
            \hline
            $c$& --&--& $c,d$ or $c,d'$& $a,c$
             & $c,u$ or $c,u'$ or $c,u''$
            \\
            \hline
        \end{tabular}
  \caption{Case $1(iii)$.}
  \label{tab:1iii}
\end{table}

Finally, observe that since $W'$ is dense and $a,c\in W$, then $W$ is also dense. 
Morover, $|W|=|W'|+2 = \cnp+2=\cn$. Therefore,  $W$ is a dense FTRS of size $\cn$ in each of Cases $1(i), 1(ii)$ and $1(iii)$.

\begin{figure}[!htb]
\centering
	\subfigure[$2(i)$; $b\in W'$.]{
        \includegraphics[scale=0.8, page=44]{Figures.pdf}
		\label{fig:casodiag6_1}
	}~~~~
    	\subfigure[$2(i)$; $b\not\in W'$.]{
        \includegraphics[scale=0.8, page=45]{Figures.pdf}
		\label{fig:casodiag6_2}
	}~~~~~~~~~~~~
    	\subfigure[$2(ii)$; $b\in W'$.]{
        \includegraphics[scale=0.8, page=46]{Figures.pdf}
		\label{fig:casodiag6_3}
    }~~~~
    	\subfigure[$2(ii)$; $b\not\in W'$.]{
        \includegraphics[scale=0.80, page=47]{Figures.pdf}
		\label{fig:casodiag6_4}
	}
    \caption{
    Cases $2(i)$ and $2(ii)$. In gray, the MOP  $G'=G -\{a,c,d\}$.  Black vertices belong to $W$.}
    \label{fig:casodiag6}
\end{figure}

\bigskip

\noindent
{\bf Cases $2(i)$, $2(ii)$.} 
\medskip

Suppose that the vertices are labeled as shown in Figure~\ref{fig:casodiag6}. 
Let $W'$ be a dense FTRS of $G'=G- \{a,c,d\}$ of size $|W'|=\cnp\ge 4$, 
which exists by the inductive hypothesis.
Denote by $u$ a vertex not in $\{a,b,c,d,e,f\}$, and let $u'$ and $u''$ be its neighbors on the boundary of $G'$. 
We distinguish cases according to whether 
$b$ belongs to $W'$ or not.
\medskip

\noindent
$\bullet$ 
 $b\in W'$. Let $W=W'\cup \{a,d\}$; see 
Figures~\ref{fig:casodiag6_1} and~\ref{fig:casodiag6_3}. As in the previous cases, we only need to check that every pair of vertices with at least one of them in $\{a, c, d\}$ is resolved by two vertices of $W$. Tables~\ref{tab:2ibsi} and~\ref{tab:2iibsi} show the pairs of vertices that resolve the pair $x,y$, for
$ x\in \{a, c, d\}$,  
in each of Cases $2(i)$ and $2(ii)$, respectively.
\medskip

\begin{table}[!htb]
  \centering 
  \begin{tabular}{r|c|c|c|c|c|c}
            $x\diagdown$\raisebox{1mm}{$y$}& $b$ & $c$ & $d$ & $e$ & $f$ & $u$
            \\
            \hline
            $a$ &$a,b$&$a,d$&$a,d$&$a,d$& $a,f$ or $a,f'$& $a,b$
            \\
            \hline
            $c$& $a,b$&--&$d,e$ or $d,e'$& 2 vertices of $W'\setminus \{b\}$& $a,d$&$b,d$
            \\
            \hline
            $d$& $b,d$&--&--&$d,e$ or $d,e'$& $a,d$&$b,d$
            \\
            \hline
        \end{tabular}
  \caption{Case $2(i)$; $b\in W'$.}
  \label{tab:2ibsi}
\end{table}

\begin{table}[!htb]
  \centering 
  \begin{tabular}{r|c|c|c|c|c|c}
            $x\diagdown$\raisebox{1mm}{$y$}& $b$ & $c$ & $d$ & $e$ & $f$ & $u$
            \\
            \hline
            $a$ &$a,b$&$a,d$&$a,d$&$a,d$&$a,d$&$a,b$
            \\
            \hline
            $c$& $a,b$&--&$b,d$& 2 vertices of $W'\setminus \{b\}$& $a,d$&$b,d$
            \\
            \hline
            $d$& $b,d$&--&--& $b,d$&$a,d$&$d,u$ or $d,u'$ or $d'u''$
            \\
            \hline
        \end{tabular}
  \caption{Case $2(ii)$; $b\in W'$.}
  \label{tab:2iibsi}
\end{table}

\medskip 

\noindent
$\bullet$ 
$b\notin W'$. Then $e,f\in W'$ because $W'$ is a dense FTRS in $G'$; see Figures~\ref{fig:casodiag6_2} and~\ref{fig:casodiag6_4}. Let $W=W'\cup \{a,c\}$. Tables~\ref{tab:2ibno} and~\ref{tab:2iibno} show the pairs of vertices that resolve the pair $x,y$, for 
$x\in \{a,c,d\}$, in each case.

\begin{table}[!htb]
  \centering 
  \begin{tabular}{r|c|c|c|c|c|c}
            $x\diagdown$\raisebox{1mm}{$y$}& $b$ & $c$ & $d$ & $e$ & $f$ & $u$
            \\
            \hline
            $a$ &$a,c$&$a,c$&$a,c$&$a,e$&$a,f$&$a,c$
            \\
            \hline
            $c$& $a,c$&--&$c,e$&$c,e$&$c,f$&$c,u$ or $c,u'$ or $c,u''$
            \\
            \hline
            $d$& $a,f$&--&--&$c,e$&$c,f$&$c,u$ or $c,u'$ or $c,u''$
            \\
            \hline
        \end{tabular}
  \caption{Case $2(i)$; $b\not \in W'$.}
  \label{tab:2ibno}
\end{table}

\begin{table}[!htb]
  \centering 
\begin{tabular}{r|c|c|c|c|c|c}
            $x\diagdown$\raisebox{1mm}{$y$}& $b$ & $c$ & $d$ & $e$ & $f$ & $u$
            \\
            \hline
            $a$ &$a,c$&$a,c$&$a,c$&$a,e$&$a,f$&$a,u$ or $a,u'$ or $a,u''$
            \\
            \hline
            $c$& $a,c$&--&$a,c$&$c,e$&$c,f$&$c,u$ or $c,u'$ or $c,u''$
            \\
            \hline
            $d$& $a,f$&--&--&$e,f$&$c,f$&$c,u$ or $c,u'$ or $c,u''$
            \\
            \hline
        \end{tabular}
  \caption{Case $2(ii)$; $b\not \in W'$.}
  \label{tab:2iibno}
\end{table}

Therefore, in the four cases shown in Figure~\ref{fig:casodiag6}, the set $W$ defined in each case is a FTRS of $G$, which is dense by construction and has size   $\cnp+2=\cn$.

\bigskip

\noindent
{\bf Cases $3(i)$, $3(ii)$.}

\medskip

Suppose that the vertices are labeled as shown in Figure~\ref{fig:casodiag7-i-ii} (notice that $f'=g'$ if $G$ has order 8). Denote by $u$ a vertex not in $\{a,b,c,d,e,f,g\}$ and let $u',u''$ be its two neighbors on the boundary of $G'$.
We proceed as in the previous cases by considering the MOP $G'=G- \{a,b,d\}$ of order $n-3$ and taking a dense FTRS $W'$ of $G'$ of size  $\cnp$. We analyze several cases according to whether $c$ and $e$ belong to $W'$ or not.

\begin{figure}[!htb]
\centering
	\subfigure[$3(i)$; $c\not\in W'$.]{
        \includegraphics[scale=0.8, page=48]{Figures.pdf}
		\label{fig:casodiag7-i-ii_1}
	}~~~~~
    	\subfigure[$3(i)$; $c \in W'$; $e\not\in W'$.]{
        \includegraphics[scale=0.8, page=49]{Figures.pdf}
		\label{fig:casodiag7-i-ii_2}
	}~~~
    	\subfigure[$3(i)$; $c,e\in W'$.]{
        \includegraphics[scale=0.8, page=50]{Figures.pdf}
		\label{fig:casodiag7-i-ii_3}
    }\\
    	\subfigure[$3(ii)$; $c\not\in W'$.]{
        \includegraphics[scale=0.8, page=51]{Figures.pdf}
		\label{fig:casodiag7-i-ii_4}
    }~~~~~~~~
    	\subfigure[$3(ii)$; $c \in W'$; $e\not\in W'$.]{
        \includegraphics[scale=0.8, page=52]{Figures.pdf}
		\label{fig:casodiag7-i-ii_5}
    }~~~~~~
    	\subfigure[$3(ii)$; $c,e\in W'$.]{
        \includegraphics[scale=0.8, page=53]{Figures.pdf}
		\label{fig:casodiag7-i-ii_6}
	}
    \caption{
    Cases $3(i)$ and $3(ii)$. In gray, the MOP $G'=G- \{a,b,d\}$.  Black vertices belong to $W$.}
    \label{fig:casodiag7-i-ii}
\end{figure}

\medskip

\noindent
$\bullet$
$c\notin W'$. Then $e,g\in W'$; see Figures~\ref{fig:casodiag7-i-ii_1} and~\ref{fig:casodiag7-i-ii_4}. Let $W=W'\cup\{b,d\}$. Tables~\ref{tab:3icno} and~\ref{tab:3iicno} show the pairs of vertices that resolve the pair $x,y$, for 
$x\in \{a,b,d\}$, in each of Cases $3(i)$ and $3(ii)$. 

\begin{table}[!htb]
  \centering 
\begin{tabular}{r|c|c|c|c|c|c|c}
            $x\diagdown$\raisebox{1mm}{$y$}& $b$ & $c$ & $d$ & $e$ & $f$ & $g$ & $u$
            \\
            \hline
            $a$ &$b,g$&$d,e$&$d,e$ &$d,e$&$b,e$ &$b,g$ &$b,d$
            \\
            \hline
            $b$& -- &$b,d$ &$b,d$ &$b,e$ &$b,e$ &$b,g$ &$b,d$
            \\
            \hline
            $d$& --&$b,d$&--&$d,e$ &$d,g$ & $d,g$&$b,d$
            \\
            \hline
        \end{tabular}
  \caption{Case $3(i)$; $c\not \in W'$.}
  \label{tab:3icno}
\end{table}

\begin{table}[!htb]
  \centering 
\begin{tabular}{r|c|c|c|c|c|c|c}
            $x\diagdown$\raisebox{1mm}{$y$}& $b$ & $c$ & $d$ & $e$ & $f$ & $g$ & $u$
            \\
            \hline
            $a$ & $b,d$& $d,e$& $b,d$& $b,e$& $b,e$ &$d,g$& $b,u$ or $b,u'$ or $b,u''$
            \\
            \hline
            $b$& -- & $b,d$& $b,d$& $b,e$& $b,e$& $b,g$& $b,d$
            \\
            \hline
            $d$& --&$b,d$&--& $d,e$& $d,g$& $d,g$& $d,e$
            \\
            \hline
        \end{tabular}
  \caption{Case $3(ii)$; $c\not \in W'$.}
  \label{tab:3iicno}
\end{table}
\medskip

\noindent
$\bullet$ 
$c\in W'$ and $e\notin W'$. Then $f\in W'$; see Figures~\ref{fig:casodiag7-i-ii_2} and~\ref{fig:casodiag7-i-ii_5}. Let $W=W'\cup\{a,d\}$. Tables~\ref{tab:3icsi} and~\ref{tab:3iicsi} show the pairs of vertices that resolve the pair $x,y$, for
$x\in \{a,b,d\}$, in each of Cases $3(i)$ and $3(ii)$.

\begin{table}[!htb]
  \centering 
\begin{tabular}{r|c|c|c|c|c|c|c}
            $x\diagdown$\raisebox{1mm}{$y$}& $b$ & $c$ & $d$ & $e$ & $f$ & $g$ & $u$
            \\
            \hline
            $a$ & $a,g$ or $a,g'$& $a,c$& $a,d$& $a,d$&$a,f$ & $a,f$& $a,c$
            \\
            \hline
            $b$& -- & $c,d$& $a,d$& $d,f$& $a,f$& $f,g $ or $f,g'$& $a,c$
            \\
            \hline
            $d$& --&$c,d$&--& $d,f$& $d,f$& $a,d$&$c,d$
            \\
            \hline
        \end{tabular}
  \caption{Case $3(i)$; $c\in W'$; $e\not \in W'$.}
  \label{tab:3icsi}
\end{table}

\begin{table}[!htb]
  \centering 
\begin{tabular}{r|c|c|c|c|c|c|c}
            $x\diagdown$\raisebox{1mm}{$y$}& $b$ & $c$ & $d$ & $e$ & $f$ & $g$ & $u$
            \\
            \hline
            $a$ & $a,c$& $a,c$& $a,d$& $a,d$& $a,f$& $a,c$& $a,u$ or $a,u'$ or $a,u''$
            \\
            \hline
            $b$& -- & $a,c$& $a,d$& $a,d$& $a,f$& $f,g$ or $f,g'$& $a,c$
            \\
            \hline
            $d$& --&$c,d$&--& $d,f$& $d,f$& $a,d$& $c,d$
            \\
            \hline
        \end{tabular}
  \caption{Case $3(ii)$; $c\in W'$; $e\not \in W'$.}
  \label{tab:3iicsi}
\end{table}
\medskip

\noindent
$\bullet$  
$c,e\in W'$. Let $W=W'\cup \{a,b\}$; see Figures~\ref{fig:casodiag7-i-ii_3} and~\ref{fig:casodiag7-i-ii_6}. Tables~\ref{tab:3icesi} and~\ref{tab:3iicesi} show the pairs of vertices that resolve the pair $x,y$, for $x\in \{a,b,d\}$, 
in each of the two cases.

\begin{table}[H]
  \centering 
\begin{tabular}{r|c|c|c|c|c|c|c}
            $x\diagdown$\raisebox{1mm}{$y$}& $b$ & $c$ & $d$ & $e$ & $f$ & $g$ & $u$
            \\
            \hline
            $a$ & $a,b$& $a,c$& $a,b$& $a,e$& $a,e$& $a,b$&$a,b$
            \\
            \hline
            $b$& -- & $b,c$& $a,b$& $b,e$& $a,b$& $b,g$ or $b,g'$& $a,b$
            \\
            \hline
            $d$& --&$b,c$&--& $e,f$ or $e,f'$& 2 vertices of $W'\setminus\{c,e\}$& $a,e$& $c,e$
            \\
            \hline
        \end{tabular}
  \caption{Case $3(i)$; $c, e\in W'$.}
  \label{tab:3icesi}
\end{table}

\begin{table}[H]
  \centering 
\begin{tabular}{r|c|c|c|c|c|c|c}
            $x\diagdown$\raisebox{1mm}{$y$}& $b$ & $c$ & $d$ & $e$ & $f$ & $g$ & $u$
            \\
            \hline
            $a$ & $a,b$& $a,c$& $a,c$& $a,e$& $a,b$& $a,c$& $a,b$
            \\
            \hline
            $b$& -- & $b,c$& $a,b$& $b,e$& $b,e$& $b,g$ or $b,g'$& $a,b$
            \\
            \hline
            $d$& --&$b,c$&--& $e,f$ or $e,f'$& 2 vertices of $W'\setminus\{c,e\}$& $a,e$& $c,e$
            \\
            \hline
        \end{tabular}
  \caption{Case $3(ii)$; $c, e\in W'$.}
  \label{tab:3iicesi}
\end{table}
\medskip

In the six cases shown in Figure~\ref{fig:casodiag7-i-ii}, the set $W$ defined in each case is a FTRS of $G$. In addition, $W$ is dense by construction and has size   $\cnp+2=\cn$.
\bigskip

\noindent
{\bf Case  $3(iii)$.}

\medskip

Suppose that the vertices are labeled as shown in Figure~\ref{fig:casodiag7-iii} (notice that $f'=g'$ if $G$ has order 8). Denote by $u$ a vertex not in $\{a,b,c,d,e,f,g\}$ and let $u'$ and $u''$ be its neighbors on the boundary of $G'$. Let $W'$ be a dense FTRS of size   $\cnp$ of the MOP $G'=G- \{a,b,e\}$ of size $n-3$.
We give a FTRS $W$ of $G$, depending on whether $c$ and $d$ belong to $ W'$ or not.

\begin{figure}[!htb]
\centering
	\subfigure[$3(iii)$; $c\not\in W'$.]{
        \includegraphics[scale=0.8, page=54]{Figures.pdf}
		\label{fig:casodiag7-iii_1}
	}~~~~~~~~~
    	\subfigure[$3(iii)$; $c \in W'$; $d\not\in W'$.]{
        \includegraphics[scale=0.8, page=55]{Figures.pdf}
		\label{fig:casodiag7-iii_2}
	}~~~~
    	\subfigure[$3(iii)$; $c,d\in W'$.]{
        \includegraphics[scale=0.8, page=56]{Figures.pdf}
		\label{fig:casodiag7-iii_3}
	}
    \caption{
    Case $3(iii)$. In gray, the MOP $G'=G- \{a,b,e\}$. Black vertices belong to $W$.}
    \label{fig:casodiag7-iii}
\end{figure}

\medskip

\noindent
$\bullet$ 
$c\notin W'$. Then $d,g\in W'$; see Figure~\ref{fig:casodiag7-iii_1}. Let $W=W'\cup\{b,e\}$. Table~\ref{tab:3iiicno} gives the pairs of vertices that resolve the pair $x, y$, for 
$x\in \{a,b,e\}$. 

\begin{table}[!htb]
  \centering 
    \begin{tabular}{r|c|c|c|c|c|c|c}
            $x\diagdown$\raisebox{1mm}{$y$}& $b$ & $c$ & $d$ & $e$ & $f$ & $g$ & $u$
            \\
            \hline
            $a$ & $b,d$& $d,e$& $d,e$& $d,e$& $d,e$& $e,g$&  $b,u$ or $b,u'$ or $b,u''$
            \\
            \hline
            $b$& -- & $b,d$& $b,d$& $b,e$& $b,e$& $b,g$& $b,u$ or $b,u'$ or $b,u''$
            \\
            \hline
            $e$& --&$b,e$&$d,e$&--& $e,g$& $e,g$& $d,e$
            \\
            \hline
        \end{tabular}
  \caption{Case $3(iii)$; $c \not\in W'$.}
  \label{tab:3iiicno}
\end{table}
\medskip

\noindent
$\bullet$ 
$c\in W'$ and $d\notin W'$. Then, $f\in W'$; see Figure~\ref{fig:casodiag7-iii_2}. Let $W=W'\cup\{a,e\}$. Table~\ref{tab:3iiicsi} gives the pairs of vertices that resolve the pair  $x,y$, for 
$x\in \{a,b,e\}$.

\begin{table}[!htb]
  \centering 
\begin{tabular}{r|c|c|c|c|c|c|c}
            $x\diagdown$\raisebox{1mm}{$y$}& $b$ & $c$ & $d$ & $e$ & $f$ & $g$ & $u$
            \\
            \hline
            $a$ & $a,c$& $a,c$& $a,c$& $a,e$& $a,f$& $a,c$& $a,u$ or $a,u'$ or $a,u''$
            \\
            \hline
            $b$& -- & $a,c$& $a,e$ & $a,e$& $a,f$& $e,f$& $a,c$
            \\
            \hline
            $e$&--&$c,e$&$c,e$&--& $e,f$& $a,e$& $e,u$ or $e,u'$ or $e,u''$
            \\
            \hline
        \end{tabular}
  \caption{Case $3(iii)$; $c \in W'$; $d \not\in W'$.}
  \label{tab:3iiicsi}
\end{table}

\medskip

\noindent
$\bullet$ 
$c,d\in W'$. Let $W=W'\cup\{a,e\}$; see Figure~\ref{fig:casodiag7-iii_3}.  Table~\ref{tab:3iiicdsi} gives the pairs of vertices that resolve the pair $x, y$, 
$x\in \{a,b,e\}$. 

\begin{table}[!htb]
  \centering 
\begin{tabular}{r|c|c|c|c|c|c|c}
            $x\diagdown$\raisebox{1mm}{$y$}& $b$ & $c$ & $d$ & $e$ & $f$ & $g$ & $u$
            \\
            \hline
            $a$ & $a,c$& $a,c$& $a,d$& $a,e$& $a,e$& $a,c$& $a,u$ or $a,u'$ or $a,u''$
            \\
            \hline
            $b$& -- & $a,c$& $a,d$& $a,e$& $a,e$& $e,g$ or $e,g'$& $a,c$
            \\
            \hline
            $e$& --&$c,e$&$d,e$&--& $c,e$& $d,e$& $d,e$
            \\
            \hline
        \end{tabular}
  \caption{Case $3(iii)$; $c,d \in W'$.}
  \label{tab:3iiicdsi}
\end{table}

In the three cases shown in Figure~\ref{fig:casodiag7-iii}, the set $W$ defined in each case is a dense FTRS of $G$ and $|W|=\cnp+2=\cn$. 

\bigskip

In summary, we have built a dense FTRS $W$ of $G$ of size $\cn$ in all cases. As a consequence, $\beta'(G)\le \cn$ for every MOP $G$ of order $n\ge 5$. 
To complete the proof of the theorem, we show that $W$ can be computed in linear time. Based on the previous reasoning, we need to find a diagonal $d_1$ of $G$ defining a MOP of order 5, 6, or 7. Then, after removing three suitable vertices of $G$, say $a,b$ and $c$,  we need to find another diagonal $d_2$ that defines a MOP of order 5, 6, or 7 in $G-\{a,b,c\}$, and so on. To find this set of diagonals $D=\{d_1, d_2, \dots\}$, we use the following observation: A diagonal of a MOP $H$ corresponds to an edge in its dual graph $T_H$, which is a tree whose vertices have degree at most three. Hence, finding a diagonal defining a MOP of order 5, 6, or 7 in $H$ is equivalent to finding an edge in $T_H$ whose deletion defines a component in $T_H$ of order 3, 4, or 5, respectively. 

Thus, the algorithm works as follows. From $G$, the algorithm first builds the dual tree  $T$ of $G$. 
This can be done in linear time. Now, the algorithm explores $T$ in postorder, adding the explored vertices to a list $L$. During the exploration, observe that when the algorithm finds an edge $e$ of $T$ defining a component $C$ of order 3, 4, or 5, the vertices of $C$ are precisely the last vertices added to $L$. Thus, when exploring a new vertex, the algorithm only needs to check whether the last vertices in $L$ define the desired component. If so, the algorithm removes three vertices from $L$ (and from $T$ and $G$) as previously explained, adds the corresponding diagonal of $G$ to $D$, and continues the postorder exploration to search for the next diagonal provided that the updated tree $T$ has at least six vertices (eight vertices in the corresponding updated MOP).
Since these last operations can be performed in constant time and the postorder traversal of $T$ takes linear time, the set of diagonals $D$ can be found in linear time. Finally, exploring the diagonals in $D$ in reverse order, the algorithm can build $W$ by adding two vertices per diagonal, as described in the proof of the theorem. This last exploration requires only linear time, so a FTRS $W$ of cardinality $\cn$ can be found in linear time.
\end{proof}

\section{Conclusions}\label{sec:conclusion}

In this paper, we have studied the FTMD problem for MOPs. We have showed that there are exactly two MOPs $G$ with $\beta'(G)=3$, that $4\le \beta'(G)\le 6$ for any MOP $G$ with $\beta(G)=2$, and that there exists a FTRS of size   $\left\lceil \frac{2n}{3}\right\rceil$ for any MOP $G$ of order $n$.
In this context, several open problems remain. We outline a few of them. 

\begin{problem}
Given a MOP $G$, prove or disprove whether computing the FTMD of $G$ is NP-hard.
\end{problem}

\begin{problem}
Characterize all MOPs $G$ such that $\beta (G)=2$ and $\beta'(G)=k$, for each fixed value of $k\in \{4,5,6\}$.
\end{problem}

\begin{problem}
Characterize all MOPs $G$ such that $\beta (G)=3$ and $\beta'(G)=4$.
\end{problem}

To conclude the paper, we conjecture the following upper bound 
based on the known value of the FTMD of fans. 
A {\em fan} of order $n$, denoted by $F_n$, $n\ge 5$,
is a MOP such that exactly one of the vertices is connected to all the other vertices. 
It is known that $\beta'(F_n)=\left\lceil \frac n2 \right\rceil$, if $n\ge 7$ (see \cite{Cab21,Estr16bis}). 
We suspect that the maximum value of the FTMD of a MOP is attained by these graphs. 

\begin{problem}
Prove or disprove that $\beta'(G) \le \left\lceil \frac{n}{2}\right\rceil$, for every MOP $G$ of order $n\ge 7$.
\end{problem}

\section*{Acknowledgements}

Carmen Hernando, Merc\`e Mora and Javier Tejel acknowledge the support of the project PID2023-150725NB-I00, funded by MICIU/AEI/10.13039/501100011033. Javier Tejel also acknowledges the support of the project E41-23R, funded by Gobierno de Arag\'on. 
Carmen Hernando, Montserrat Maureso and Merc\`e Mora are also supported by grant Gen.Cat.DGR 2021-SGR-00266 from Ag\`encia de Gesti\'o d'Ajuts Universitaris i de Recerca.

\section*{Declaration of competing interest}

The authors declare that they have no known competing financial interests or personal relationships that could have appeared to influence the work reported in this paper.

\bibliographystyle{plainurl}
\bibliography{references}

@article{Ass25,
  title       = {Fault-tolerance in biswapped multiprocessor interconnection networks},
  journal     = {Journal of Parallel and Distributed Computing},
  volume      = {196},
  pages       = {105009},
  year        = {2025},
  doi         = {10.1016/j.jpdc.2024.105009},
  author      = {Basem Assiri and Muhammad F. Nadeem and Waqar Ali and Ali Ahmad},
}

@article{Aze21,
  author       = {Muhammad Azeem and Muhammad F. Nadeem},
  title        = {Metric-based resolvability of polycyclic aromatic hydrocarbons},
  journal      = {The European Physical Journal Plus},
  volume       = {136},
  year         = {2021},
  pages        = {395},
  doi          = {10.1140/epjp/s13360-021-01399-8},
}

@article{BY19,
author = {Bailey, Robert and G. Yero, Ismael},
year = {2019},
month = {01},
pages = {341},
title = {Error-correcting codes from $k$-resolving sets},
volume = {39},
journal = {Discussiones Mathematicae Graph Theory},
doi = {10.7151/dmgt.2087}
}

@article{Bas20,
  title        = {Fault-tolerant metric dimension of circulant graphs {$C_n(1,2,3)$}},
  journal      = {Theoretical Computer Science},
  volume       = {817},
  pages        = {66--79},
  year         = {2020},
  doi          = {10.1016/j.tcs.2019.01.011},
  author       = {Mithun Basak and Laxman Saha and Gour K. Das and Kalishankar Tiwary},
}

@article{Bash21,
  author      = {Humera Bashir and Zohaib Zahid and Agha Kashif and Sohail Zafar and Jia-Bao Liu},
  title       = {On 2-metric resolvability in rotationally-symmetric graphs},
  journal     = {Journal of Intelligent \& Fuzzy Systems},
  volume      = {40},
  number      = {6},
  pages       = {11887--11895},
  year        = {2021},
  doi         = {10.3233/JIFS-210040},
}

@article{Bash22,
  author      = {Bashir, Humera and Zahid, Zohaib and Ojiema, Michael Onyango},
  title       = {Computation of the Fault-Tolerant Metric Dimension of Certain Networks},
  journal     = {Mathematical Problems in Engineering},
  volume      = {2022},
  number      = {1},
  pages       = {6385673},
  doi         = {10.1155/2022/6385673},
  year        = {2022}
}

@article{Cab21,
  title        = {On 2-Resolving Sets in the Join and Corona of Graphs},
  author       = {Jean M. Cabaro and Helen Rara},
  volume       = {14},
  number       = {3},
  pages        = {773--782},
  journal      = {European Journal of Pure and Applied Mathematics},
  year         = {2021},
  doi          = {10.29020/nybg.ejpam.v14i3.3977},
}

@article{Clav21,
  author       = {Merc{\`e} Claverol and Alfredo Garc{\'\i}a and Gregorio Hern{\'a}ndez and Carmen Hernando and Monserrat Maureso and Merc{\`e} Mora and Javier Tejel},
  title        = {Metric dimension of maximal outerplanar graphs},
  journal      = {Bulletin of the {M}alaysian {M}athematical {S}ciences {S}ociety},
  volume       = {44},
  pages        = {2603--2630},
  year         = {2021},
  doi          = {10.1007/s40840-020-01068-6},
}

@article{Corr21,
  title       = {A note on {$k$}-metric dimensional graphs},
  journal     = {Discrete Applied Mathematics},
  volume      = {289},
  pages       = {523--533},
  year        = {2021},
  doi         = {10.1016/j.dam.2020.11.011},
  author      = {Samuel G. Corregidor and {\'A}lvaro Mart{\'\i}nez-P{\'e}rez},
}

@article{Estr15,
  title       = {The {$k$}-Metric Dimension of a Graph},
  journal     = {Applied Mathematics {\&} Information Sciences},
  volume      = {9},
  pages       = {2829--2840},
  year        = {2015},
  doi         = {10.12785/amis/090609},
  author      = {Alejandro Estrada-Moreno and Juan A. Rodr{\'\i}guez-Vel{\'a}zquez and Ismael G. Yero},
}

@article{Estr16,
  author      = {Estrada-Moreno, Alejandro and Rodr\'{\i}guez-Vel\'{a}zquez, Juan A. and Yero, Ismael G.},
  title       = {The {$k$}-metric dimension of the lexicographic product of graphs},
  year        = {2016},
  volume      = {339},
  number      = {7},
  doi         = {10.1016/j.disc.2015.12.024},
  journal     = {Discrete Mathematics},
  pages       = {1924--1934},
}

@article{Estr16bis,
  author      = {Estrada-Moreno, Alejandro and Rodr\'{\i}guez-Vel\'{a}zquez, Juan A. and Yero, Ismael G.},
  title       = {The {$k$}-metric dimension of corona product graphs},
  year        = {2016},
  volume      = {39},
  doi         = {10.1007/s40840-015-0282-2},
  journal     = {Bulletin of the {M}alaysian {M}athematical {S}ciences {S}ociety},
  pages       = {135--154},
}

@article{Yero17,
  title       = {Computing the $k$-metric dimension of graphs},
  journal     = {Applied Mathematics and Computation},
  volume      = {300},
  pages       = {60-69},
  year        = {2017},
  issn        = {0096-3003},
  doi         = {https://doi.org/10.1016/j.amc.2016.12.005},
  author      = {Alejandro Estrada-Moreno and Juan A. Rodr{\'\i}guez-Vel{\'a}zquez and Ismael G. Yero},
}

@article{Fah25,
  author      = {M. Faheem and M. Ahmad and Z. Zahid and Njood. S. E. ALmutire and Hamiden Abd El-Wahed Khalifa and Taha Radwan},
  title       = {Investigating the Metric and Fault-Tolerant Dimensions in Para-Line Network Topologies},
  journal     = {AKCE International Journal of Graphs and Combinatorics},
  volume      = {00},
  number      = {0},
  pages       = {1--12},
  year        = {2025},
  doi         = {10.1080/09728600.2025.2487427},
}

@inproceedings{Gao25,
  author      = {Gao, Suogang and Hou, Bo and Liu, Wen},
  editor      = {Pardalos, Panos M. and Du, Ding-Zhu and Thai, M. T.},
  title       = {The metric dimension of a graph and its variants},
  bookTitle   = {Handbook of Combinatorial Optimization},
  year        = {2025},
  publisher   = {Springer, New York},
  pages       = {1--28},
  doi         = {10.1007/978-1-4614-6624-6_103-1},
}

@article{Gee17,
  title       = {2-Metric dimension of Cartesian product of graphs},
  volume      = {112},
  doi         = {10.12732/ijpam.v112i1.2},
  number      = {1},
  journal     = {International Journal of Pure and Applied Mathematics},
  author      = {K.N. Geetha and B. Sooryanarayana},
  year        = {2017},
  pages       = {27--45},
}

@article{Gen25,
  author      = {Jesse Geneson and Shen-Fu Tsai},
  title       = {Fault tolerance for metric dimension and its variants},
  journal     = {arXiv:2502.02731},
  year        = {2025},
  url         = {https://arxiv.org/abs/2502.02731}
}

@article{Har76,
  author      = {Harary, Frank and  Melter, R.A.},
  title       = {On the metric dimension of a graph},
  journal     = {Ars Combinatoria},
  volume      = {2},
  pages       = {191--195},
  year        = {1976},
}

@article{Hay20,
  author      = {Hayat, Sakander and Khan, Asad and Malik, Muhammad Y. H. and Imran, Muhammad and Siddiqui, Muhammad K.},
  journal     = {IEEE Access},
  title       = {Fault-Tolerant Metric Dimension of Interconnection Networks},
  year        = {2020},
  volume      = {8},
  pages       = {145435--145445},
  doi={10.1109/ACCESS.2020.3014883},
}

@inproceedings{Her08,
  title       = {Fault-tolerant metric dimension of graphs},
  author      = {Carmen Hernando and Merce Mora and Slater, Peter J. and David Wood},
  year        = {2008},
  pages       = {81--85},
  editor      = {Manoj Changat and Sandi Klavzar and A Vijayakumar},
  booktitle   = {Convexity in {D}iscrete {S}tructures},
  publisher   = {International Press of Boston, Inc.},
  address     = {USA},
}

@article{Huo20,
  title       = {On the 2-metric resolvability of graphs},
  volume      = {5},
  doi         = {10.3934/math.2020425},
  number      = {6},
  journal     = {AIMS Mathematics},
  author      = {Chenggang Huo and Humera Bashir and Zohaib Zahid and Yu Ming Chu},
  year        = {2020},
  pages       = {6609--6619},
}

@article{Jav09,
  author      = {Javaid, Imran and Salman, Muhammad and Chaudhry, Muhammad A. and  Shokat, Sara },
  title       = {Fault-tolerance in resolvability},
  journal     = {Utilitas {M}athematica},
  volume      = {80},
  pages       = {263--275},
  year        = {2009},
}

@article{Khan25,
  title       = {Fault-tolerance and unique identification of vertices and edges in a graph: The fault-tolerant mixed metric dimension},
  journal     = {Journal of Parallel and Distributed Computing},
  volume      = {197},
  pages       = {105024},
  year        = {2025},
  doi         = {10.1016/j.jpdc.2024.105024},
  author      = {Asad Khan and Sikander Ali and Sakander Hayat and Muhammad Azeem and Yubin Zhong and Manzoor Ahmad Zahid and Mohammed J.F. Alenazi},
}

@article{Knor25,
  author      = {Martin Knor and Jelena Sedlar and Riste \v{S}krekovski},
  title       = {Fault tolerance of metric basis can be expensive},
  journal     = {Mediterranean Journal of Mathematics},
  year        = {2025},
  volume      = {22},
  pages       = {148},
  doi         = {10.1007/s00009-025-02894-3},
}

@article{Liu21,
  volume      = {6},
  doi         = {10.3934/math.2021069},
  number      = {2},
  journal     = {AIMS Mathematics},
  year        = {2021},
  pages       = {1140--1152},
  author      = {Xiaogang Liu and Muhammad Ahsan and Zohaib Zahid and Shuili Ren},
  title       = {Fault-tolerant edge metric dimension of certain families of graphs},
}

@article{Man25,
  author      = {Manikonda, Gayathri and Bhat, Vijay Kumar and Sharma, Sunny Kumar},
  title       = {Unique fault-tolerance resolvability codes for certain generalized convex polytopes with applications},
  journal     = {Discrete Mathematics, Algorithms and Applications},
  volume      = {0},
  number      = {0},
  pages       = {2550013},
  year        = {2025},
  doi         = {10.1142/S1793830925500132},
}

@article{MST22,
  title       = {Resolving sets tolerant to failures in three-dimensional grids},
  journal     = {Mediterranean Journal of Mathematics},
  volume      = {19},
  number      = {188},
  pages       = {},
  year        = {2022},
  doi         = {10.1007/s00009-022-02096-1},
  author      = {Merc\`e Mora and Mar\'ia Jos\'e Souto-Salorio and Ana D. Tarr\'io-Tobar},
}

@article{Nad22,
  title       = {On Metric Dimension and Fault-Tolerant Metric Dimension of Some Chemical Structures},
  journal     = {Polycyclic Aromatic Compounds},
  volume      = {42},
  number      = {10},
  pages       = {6975--6987},
  year        = {2022},
  doi         = {10.1080/10406638.2021.1994429},
  author      = {Muhammad F. Nadeem and Ayesha Shabbir and Muhammad Azeem},
}

@article{Pra22,
  title       = {Twin vertices in fault-tolerant metric sets and fault-tolerant metric dimension of multistage interconnection networks},
  journal     = {Applied Mathematics and Computation},
  volume      = {420},
  pages       = {126897},
  year        = {2022},
  doi         = {10.1016/j.amc.2021.126897},
  author      = {Savari Prabhu and V. Manimozhi and M. Arulperumjothi and Sandi Klav{\v{z}}ar},
}

@article{Rah22,
  title       = {The {$k$}-metric Dimension of Double Fan Graph},
  volume      = {2},
  doi         = {10.14421/quadratic.2022.021-05},
  number      = {1},
  journal     = {Quadratic: Journal of Innovation and Technology in Mathematics and Mathematics Education},
  author      = {Deddy Rahmadi and Yeni Susanti},
  year        = {2022},
  pages       = {46--51}
}

@article{Saha21,
  doi         = {10.1088/1742-6596/1714/1/012029},
  year        = {2021},
  volume      = {1714},
  number      = {1},
  pages       = {012029},
  author      = {Saha, Laxman},
  title       = {Fault-Tolerant Metric Dimension of Cube of Paths},
  journal     = {Journal of Physics: Conference Series},
}

@Article{Saha22,
  author      = {Saha, Laxman and Lama, Rupen and Tiwary, Kalishankar and Das, Kinkar Chandra and Shang, Yilun},
  title       = {Fault-Tolerant Metric Dimension of Circulant Graphs},
  journal     = {Mathematics},
  volume      = {10},
  year        = {2022},
  number      = {1},
  pages       = {124},
  doi         = {10.3390/math10010124}
}

@article{Sar24,
  author      = {Muhammad S. Sardar and Komal Rasheed and Murat Cancan and Mohammad R. Farahani and Mehdi Alaeiyan and Shobha V. Patil},
  title       = {Fault Tolerant Metric Dimension of Arithmetic Graphs},
  journal     = {Journal of Combinatorial Mathematics and Combinatorial Computing},
  year        = {2024},
  volume      = {122},
  pages       = {13--32},
  doi         = {10.61091/jcmcc122-02},
}

@article{Sch23,
  title       = {A note on the complexity of {$k$}-metric dimension},
  journal     = {Applied Mathematics and Computation},
  volume      = {457},
  pages       = {128204},
  year        = {2023},
  doi         = {10.1016/j.amc.2023.128204},
  author      = {Yannick Schmitz and Duygu Vietz and Egon Wanke},
}

@article{Shar25,
  title       = {Fault-tolerant resolvability of double antiprism and its related graphs},
  journal     = {Annals of the University of Craiova},
  volume      = {52},
  number      = {1},
  pages       = {170--186},
  year        = {2025},
  doi         = {0.52846/ami.v52i1.1939},
  author      = {Sunny Kumar Sharma and Vijay Kumar Bhat},
}

@article{Sher91,
  title       = {Covering and guarding polygons using {$L_k$}-sets},
  journal     = {Geometriae Dedicata},
  volume      = {37},
  pages       = {183--203},
  year        = {1991},
  doi         = {10.1007/BF00147413},
  author      = {Thomas Shermer},
}

@article{Sid19,
  title       = {Resolvability and fault-tolerant resolvability structures of convex polytopes},
  journal     = {Theoretical Computer Science},
  volume      = {796},
  pages       = {114-128},
  year        = {2019},
  doi         = {10.1016/j.tcs.2019.08.032},
  author      = {Hafiz M. A. Siddiqui and Sakander Hayat and Asad Khan and Muhammad Imran and Ayesha Razzaq and Jia-Bao Liu},
}

@article{Slater,
  author      = {Slater, Paul J.},
  title       = {Leaves of trees},
  journal     = {Congressus Numerantium},
  volume      = {14},
  pages       = {549--559},
  year        = {1975},
}

@article{Som19,
  title       = {Fault-tolerant resolvability of oxide interconnections},
  journal     = {International Journal of Innovative Technology and Exploring Engineering},
  volume      = {8},
  number      = {12},
  pages       = {1778--1784},
  year        = {2019},
  doi         = {10.35940/ijitee.L3245.1081219},
  author      = {Somasundari, M. and Raj, F. Simon},
}

@article{Till23,
  author      = {Tillquist, Richard C. and Frongillo, Rafael M. and Lladser, Manuel E.},
  title       = {Getting the Lay of the Land in Discrete Space: A Survey of Metric Dimension and Its Applications},
  journal     = {SIAM Review},
  volume      = {65},
  number      = {4},
  pages       = {919--962},
  year        = {2023},
  doi         = {10.1137/21M1409512},
}

@article{Wang21,
  author      = {Wang, Hua and Azeem, Muhammad and Nadeem, Muhammad F. and Ur-Rehman, Ata and Aslam, Adnan},
  title       = {On Fault-Tolerant Resolving Sets of Some Families of Ladder Networks},
  journal     = {Complexity},
  volume      = {2021},
  number      = {1},
  pages       = {9939559},
  doi         = {10.1155/2021/9939559},
  year        = {2021},
}

@article{Zhe21,
  author      = {Zheng, Zhi-Bo and Ahmad, Ashfaq and Hussain, Zaffar and Munir, Mobeen and Qureshi, Muhammad I. and Ali, Imtiaz and Liu, Jia-Bao},
  title       = {Fault-Tolerant Metric Dimension of Generalized Wheels and Convex Polytopes},
  journal     = {Mathematical Problems in Engineering},
  volume      = {2020},
  number      = {1},
  pages       = {1216542},
  doi         = {10.1155/2020/1216542},
  year        = {2020},
}

\end{document}